\newcommand\Tstrut{\rule{0pt}{2.6ex}}       
\newcommand\Bstrut{\rule[-1.1ex]{0pt}{0pt}}
\newtheorem{lema}{Lemma}
\newtheorem{prop}[lema]{Proposition}
\newtheorem{Theorem}[lema]{Theorem}
\theoremstyle{definition}
\newtheorem{definition}[lema]{Definition}
\newtheorem{Rem}[lema]{Remark}
\newtheorem{example}[lema]{Example}
\newtheorem{examples}[lema]{Examples}
\newcommand{\ba}{\mathbf{a}}
\newcommand{\cA}{\mathcal{A}}
\newcommand{\cJ}{\mathcal{J}}
\newcommand{\cS}{\mathcal{S}}
\newcommand{\cE}{\mathcal{E}}
\newcommand{\Sn}{{\mathbb S}}
\newcommand{\B}{{\mathbb B}}
\newcommand{\A}{{\mathbb A}}
\newcommand{\MM}{{\mathbb M}}
\newcommand{\ot}{{\otimes}}
\newcommand{\ku}{\Bbbk}
\newcommand{\N}{{\mathbb N}}
\newcommand{\F}{{\mathbb F}}
\newcommand{\BV}{{\mathcal B}}
\newcommand{\ydh}{{}^H_H\mathcal{YD}}
\newcommand{\ydg}{{}^{\ku G}_{\ku G}\mathcal{YD}}
\newcommand{\ydgdual}{{}^{\ku^G}_{\ku^G}\mathcal{YD}}
\newcommand{\Ss}{{\mathcal S}}
\newcommand{\Aut}{\operatorname{Aut}}
\newcommand{\Ext}{\operatorname{Ext}}
\newcommand{\Ind}{\operatorname{Ind}}
\newcommand\tr{\operatorname{tr}}
\newcommand\Rep{\operatorname{Rep}}
\newcommand\ad{\operatorname{ad}}
\def\pf{\begin{proof}}
\def\epf{\end{proof}}
\newcommand\supp{\operatorname{Supp}}
\newcommand\cop{{\operatorname{cop}}}
\newcommand\gr{{\operatorname{gr}}}
\newcommand\Irr{\operatorname{Irr}}
\newcommand\Aff{\operatorname{Aff}}
\newcommand\e{\varepsilon}
\newcommand\be{\mathbf{e}}
\newcommand{\bao}[1]{
\settowidth{\ancho}{$#1$}
\settoheight{\alto}{$#1$}
\hbox{\vbox{  \hbox{{${}_\ba$}}\kern \alto}\vbox{\hrule height 0.5pt width \ancho \kern 1mm \hbox{$#1$}}}
}
\begin{document}

\title[Representations of copointed Hopf algebras]{Representations of copointed Hopf algebras arising from the tetrahedron rack}
\author[pogorelsky and vay]{B\'arbara Pogorelsky and Cristian Vay}

\address{Instituto de Matem\'atica, Universidade Federal do Rio Grande do Sul, Av. Bento Goncalves 9500, Porto Alegre, RS, 91509-900, Brazil} \email{barbara.pogorelsky@ufrgs.br}

\address{FaMAF-CIEM (CONICET), Universidad Nacional de C\'ordoba, Medina A\-llen\-de s/n, Ciudad Universitaria, 5000 C\' ordoba, Rep\'ublica Argentina.} \email{vay@famaf.unc.edu.ar}

\thanks{\noindent 2000 \emph{Mathematics Subject Classification.}16W30. \newline B. Pogorelsky was partially supported by Capes-Brazil. C. Vay was partially supported by ANPCyT-Foncyt, CONICET, MinCyT (C\'ordoba)  and Secyt (UNC)}

\maketitle

\smallbreak

\begin{abstract}We study the copointed Hopf algebras attached to the Ni\-chols algebra of the affine rack $\Aff(\F_4,\omega)$, also known as tetrahedron rack,  and the $2$-cocycle $-1$.
We investigate the so-called Verma modules and classify all the simple modules. We conclude that these algebras are of wild representation type and not quasitriangular, also we analyze when these are spherical.

\end{abstract}

\setcounter{tocdepth}{1}


\section{Introduction}
\quad We work over an algebraically closed field $\ku$ of characteristic zero. Let $G$ be a finite non-abelian group and let $\ku^G$ denote the algebra of functions on $G$. A Hopf algebra with coradical isomorphic to $\ku^G$ for some $G$ is called {\it copointed}. Nicol\'as Andruskiewitsch and the second author began the study of the copointed Hopf algebras by classifying those finite-dimensional with $G=\Sn_3$ in \cite{AV} and by analyzing the representation theory of them in \cite{AV2}.

Since $\ku^G$ is a commutative semisimple algebra, the representation theory of a copointed Hopf algebra over $\ku^G$ is studied in \cite{AV2} by analogy with the representation theory of semisimple Lie algebras, with $\ku^G$ playing the role of the Cartan subalgebra and the induced modules from the simple one-dimensional $\ku^G$-modules as Verma modules.

\smallbreak

There are few examples of Nichols algebras of finite dimension over non-abelian groups, see for instance \cite{grania,HLV}. In particular, those arising from affine racks are only seven, including the tetrahedron rack. If $X$ is one of these affine racks, then all the liftings of the Nichols algebra $\BV(-1,X)$ over $\ku^G$ were classified in \cite{agustinvay}, where $G$ is any group admitting a principal YD-realization of $X$ with constant $2$-cocycle $-1$. Also the liftings of $\BV(X,-1)$ over the group algebra $\ku G$ were classified in \cite{agustinvay}.

The notation used in the following is explained in Section \ref{sec:pre}. Let $G$ be a finite group and $V\in\ydgdual$ arising from a faithful principal YD-realization of the tetrahedron rack with constant $2$-cocycle $-1$. The Nichols algebra $\BV(V)$ has dimension $72$. The ideal of relations of $\BV(V)$ is generated by four quadratic elements and a single extra one, of degree six, which we denote by $z$. By \cite{agustinvay}, the liftings of $\BV(V)$ over $\ku^G$ are the copointed Hopf algebras $\{\cA_{G,\lambda}\}_{\lambda\in\ku}$, in which the quadratic relations of $\BV(V)$ still hold and the $6$-degree relation $z=0$ deforms to $z=\lambda(1-\chi_z^{-1})\in\ku^G$.

The goal of this paper is to investigate the representation theory of the family $\{\cA_{G,\lambda}\}_{\lambda\in\ku}$ following the strategy of \cite{AV2}. We conclude that there are essentially two kinds of Verma modules. Here is an account of our main results which apply to any group $G$ admitting a faithful principal YD-realization of the tetrahedron rack with constant $2$-cocycle $-1$:

$\bullet$ {\it Let $g\in G$. If the element $z=\lambda(1-\chi_z^{-1})$ annihilates the generator of the Verma modules $M_g$, then $M_g$ inherits a structure of $\BV(V)$-module such that it is a free $\BV(V)$-module of rank $1$, see Lemma \ref{le:delta h en ker}. Hence  $M_g$ has a unique simple quotient of dimension $1$ called $\ku_g$.}

$\bullet$ {\it Otherwise $M_g$ is the direct sum of six $12$-dimensional non isomorphic simple projective modules $L_i^g$, see Lemma \ref{le:dim12}. Tables \ref{table:module L1g}--\ref{table:module L6g} in the Appendix describe the simple modules $L_i^g$.}

$\bullet$ {\it We prove that $\cA_{G,\lambda}$ is of wild representation type, Proposition \ref{le: extensions of one dimensional simple mod}.}

$\bullet$ {\it We give a necessary condition for a copointed Hopf algebra to be quasitriangular, Lemma \ref{le:quasi copointed}. As a consequence $\cA_{G,\lambda}$ is not quasitriangular, Proposition \ref{prop:AGlambda notquasi}.}

$\bullet$ {\it We characterize those $\cA_{G,\lambda}$ which are spherical Hopf algebras, see Proposition \ref{prop:agl spherical}}.

\smallbreak

The other copointed Hopf algebras classified in \cite{agustinvay} are defined by similar relations to $\cA_{G,\lambda}$, roughly speaking a set of quadratic ones and other single relation of bigger degree, but their dimensions are much bigger than $\dim\cA_{G,\lambda}=72|G|$. To extend this work to the other copointed Hopf algebras in \cite{agustinvay}, a better understanding of the corresponding Nichols algebras is needed. We hope that our work will be useful for this purpose.

The paper is organized as follows. In Section \ref{sec:rep of copointed} we analyze the representation theory of copointed Hopf algebras with emphasis in the weight spaces of the modules, we characterize the one-dimensional modules and describe the subalgebra corresponding to the elements of weight $e\in G$. In Section \ref {sec:pre}, we present our main object of study: the algebras $\BV(V)$ and $\cA_{G,\lambda}$. In Section \ref{sec:rep of Aglambda} we concentrate our attention on representations of the algebras $\{\cA_{G,\lambda}\}_{\lambda\in\ku}$. A description of the simple $\cA_{G,\lambda}$-modules is in the Appendix.

\subsection*{Acknowledgments}

\quad The authors thank professor Nicol\'as Andruskiewitsch for proposing this problem and useful suggestions for
this article. The first author also thanks Carolina Renz for her hospitality during her stay in C\'ordoba.

\subsection{Conventions and notation}
\quad We set $\ku^*=\ku\setminus\{0\}$. If $X$ is a set, $\ku X$ denotes the free vector space over $X$.

Let $A$ be a Hopf algebra. Then $\Delta$, $\e$, $\cS$ denote respectively the
comultiplication, the counit and the antipode. The group of group-like elements is $G(A)$. Let ${}^A_A\mathcal{YD}$ be the category of Yetter-Drinfeld modules over $A$. The Nichols algebra $\BV(V)$ of $V\in{}^A_A\mathcal{YD}$ is the graded quotient $T(V)/\cJ(V)$ where  $\cJ(V)$ is the largest Hopf ideal of $T(V)$ generated as an ideal by homogeneous elements of degree $\geq 2$ \cite[2.1]{AS-cambr}.

Let $\{A_{[n]}\}_{n\geq0}$ denote the coradical filtration of $A$. Assume $H=A_{[0]}$ is a Hopf subalgebra. Let $\gr A$ be the graded Hopf algebra associated  to the coradical filtration. Then $\gr A\simeq R\# H$ where $R\in\ydh$ is called the {\it diagram of $A$} and $V=R_{[1]}\in\ydh$ is the {\it infinitesimal braiding} \cite[Definition 1.15]{AS-cambr}. If $R=\BV(V)$, then $A$ is said to be a {\it lifting of $\BV(V)$} ({\it over} $H$).

\smallbreak

Recall that two idempotents $\be,\widetilde{\be}\in A$ are {\it orthogonal} if $\be\widetilde{\be}=0=\widetilde{\be}\be$. An idempotent is {\it primitive} if it is not possible to express it as the sum of two nonzero orthogonal idempotents. A set $\{\be_i\}_{i\in I}$ of idempotents of $A$ is {\it complete} if $1=\sum_{i\in I}e_i$.

Assume $\dim A<\infty$. Then $A$ is a Frobenius algebra, see for instance \cite[Lemma 1.5]{FMS}. Let $\be$ be a primitive idempotent of $A$. Then $top(A\be)=A\be/rad(A\be)$ and the socle $soc(A\be)$ of $A\be$ are simple modules \cite[Theorems 54.11 and 58.12]{CR}. Moreover, $A\be$ is the injective hull of  $soc(A\be)$ and the projective cover of $top(A\be)$ \cite[page 400 and Theorem 58.14]{CR}. We denote by $\Irr A$ a set of representatives of simple $A$-modules.

\section{Representations of copointed Hopf algebras}\label{sec:rep of copointed}

\quad Let $G$ be a finite group, $\ku G$ its group algebra and $\ku^G$ the algebra of functions on $G$. Let $\{g:g\in G\}$ and $\{\delta_g:g\in G\}$ be the dual basis of $\ku G$ and $\ku^G$, respectively. The identity element of $G$ will be denoted by $e$.

If $M$ is a $\ku^G$-module, then $M[g]=\delta_g\cdot M$ is the isotypic component of weight $g\in G$. We denote by $\ku_g$ the one-dimensional $\ku^G$-module of weight $g$. We define
$$M^\times=\oplus_{g\neq e}M[g]\quad\mbox{and}\quad\supp M=\{g\in G:M[g]\neq0\}.$$

\smallbreak

Through this section, {\it $A$ denotes a finite-dimensional copointed Hopf algebra over $\ku^G$, i. ~e. its coradical is isomorphic to $\ku^G$.}

\smallbreak

We consider $A$ as a left $\ku^G$-module via the left adjoint action
$$
\ad\delta_t(a)=\sum_{s\in G}\delta_sa\delta_{t^{-1}s}\quad\forall t\in G,\,a\in A.
$$
By \cite[Lemma 3.1]{AV}, $A=\oplus_{g\in G}A[g]$ is a $G$-graded algebra and
\begin{align}\label{eq:regla de commutacion}
&&&&&&\delta_t a_s=a_s\delta_{s^{-1}t} &&\forall a_s\in A[s], s,t\in G.
\end{align}
If $M$ is an $A$-module, then $M$ is a $\ku^G$-module by restriction. Hence
\begin{align}
&&&&A[g]\cdot M[h]\subseteq M[gh]&&\forall g,h\in G\,\mbox{ by \eqref{eq:regla de commutacion}.}
\end{align}
This means that $M$ is a $G$-graded $A$-module.

We denote by $A_{\ku^G}=A$ as right $\ku^G$-module via the right multiplication. Its isotypic components are $(A_{\ku^G})[g]=A\delta_g$ for all $g\in G$. Note that $A$ is a $\ku^G$-bimodule with the above actions since $\ku^G\subseteq A[e]$.

\smallbreak

Let $R\in\ydgdual$ be the diagram of $A$. Then the multiplication in $A$ induces an isomorphism $R\ot\ku^G\longrightarrow A$ of $\ku^G$-bimodules \cite[Lemma 4.1]{AAnMGV}. Hence we can think of $R$ as a left $\ku^G$-submodule of $A$ and therefore
\begin{align}\label{eq:pesos grales}
&&A[g]=R[g]\,\ku^G\,\mbox{ and }\,(A_{\ku^G})[g]=R\,\delta_g&&\forall g\in G.
\end{align}

Let $g\in G$. As in \cite{AV2}, we define the Verma module of $A$ of weight $g$ as the induced module
$$
M_g=\Ind_{\ku^{G}}^{A}\ku_g=A\ot_{\ku^{G}}\ku\delta_g.
$$
Then $M_g$ is projective, being induced from a module over a  semisimple algebra, and hence injective, because $A$ is Frobenius. By \eqref{eq:regla de commutacion} and \eqref{eq:pesos grales}, the weight spaces satisfy
\begin{align}\label{eq:pesos del verma}
&&M_g[h]=R[hg^{-1}]\delta_g&&\forall h\in G.
\end{align}
Also, $M_g=A\delta_g=R\delta_g$ and $A=\oplus_{g\in G}M_g$.

Notice that if $L$ is a simple $A$-module and $0\neq v\in L[g]$, then $L$ is a quotient of $M_g$ via $\delta_g\mapsto \delta_g\cdot v=v$.

\bigbreak

Let $\be\in A$ be an idempotent. We say that $\be$ is a {\it $g$-idempotent} if $\be\in R[e]\delta_g$. A set $\{\be_i\}_{i\in I}$ of $g$-idempotents is called {\it complete} if $\delta_g=\sum_{i\in I}\be_i$. Next lemma ensures that there always exists a complete set of orthogonal primitive $g$-idempotents.

\begin{lema}\label{lema:g idempotent2} Let $g\in G$, $\mathbf{e}$ be a $g$-idempotent and $\cE_g=\{\be_i\}_{i \in I}$ be a set of orthogonal idempotents of $A$ such that $\delta_g=\sum_{i\in I}\be_i$.
\begin{enumerate}\renewcommand{\theenumi}{\alph{enumi}}\renewcommand{\labelenumi}{
(\theenumi)}
\item\label{item:then g idempotent lema:g idempotent} $\cE_g$ is a complete set of $g$-idempotents.
\smallbreak
\item\label{item:primitive g idempotent lema:g idempotent} $\be$ is primitive if and only if it is not possible to express $\be$ as a sum of orthogonal $g$-idempotents.
\smallbreak
\item\label{item:exists cE lema:g idempotent} There is a complete set of orthogonal primitive $g$-idempotents in $A$.
\smallbreak
\item\label{item:ecdotM lema:g idempotent}$\be\cdot M=\be\cdot M[g]\subseteq M[g]$ for any $A$-module $M$.
\smallbreak
\item\label{item:primitive cota igual lema:g idempotent} If $\#\cE_g=\dim R[e]$, then $\be_i$ is primitive for all $i\in I$. Moreover, if $\be$ is primitive, then $\be=\be_i$ for some $i\in I$.
\item\label{item:primitive cota igual 2 lema:g idempotent} If $\#\cE_g=\dim R[e]$, then $A\be_i\not\simeq A\be_j$ if $i\neq j$.
\end{enumerate}
\end{lema}

\begin{proof}
\eqref{item:then g idempotent lema:g idempotent} We have to prove that $\be_i$ is a $g$-idempotent for all $i\in I$. Fix $i\in I$ and set $\alpha=\be_{i}$ and $\beta=\sum_{i\neq j\in I}\be_j$. If $t\in G$ and $t\neq g$, then $0=\delta_g\delta_t=\alpha\delta_t+\beta\delta_t$. Since $\alpha$ and $\beta$ are orthogonal, $\alpha\delta_t=0$. Hence $\alpha=\alpha\delta_g$ because $1=\sum_{g\in G}\delta_g$. Similarly $\alpha=\delta_g\alpha$. Let $a_{s}\in R[s]$ such that $\alpha=\sum_{s\in G}a_{s}\delta_g$.
Then $\alpha=\delta_g\alpha=\sum_{s\in G}\delta_ga_{s}\delta_g=\sum_{s\in G}a_{s}\delta_{s^{-1}g}\delta_g=a_{e}\delta_g$. That is, $\alpha=\be_{i}$ is a $g$-idempotent.

\eqref{item:primitive g idempotent lema:g idempotent} The first implication is obvious. For the second implication, we proceed as in \eqref{item:then g idempotent lema:g idempotent}. \eqref{item:exists cE lema:g idempotent} follows from \eqref{item:then g idempotent lema:g idempotent} and \eqref{item:primitive g idempotent lema:g idempotent}. \eqref{item:ecdotM lema:g idempotent} holds because $\be\in R[e]\delta_g$.

\eqref{item:primitive cota igual lema:g idempotent} is a consequence of the fact that $\cE_g$ is a basis of $R[e]\delta_g$. Indeed, pick $\alpha=\be_{i}\in\cE_g$ and suppose $\alpha=a+b$ with $a$ and $b$ orthogonal $g$-idempotents of $A$. Then $(Aa)[e]\oplus(Ab)[e]=(A\alpha)[e]=(\ku\cE_g)\alpha=\ku\alpha$ and therefore $a=0$ or $b=0$. For the second statement, we write $\be=\sum_{i\in I}a_i\be_i$ with $a_i\in\ku$, $i\in I$. Since $\be^2=\be$, $a_i=0$ or $1$ for all $i\in I$ and hence $\be=\be_i$ for some $i\in I$.

\eqref{item:primitive cota igual 2 lema:g idempotent} $(A\be_i)[e]=\ku\be_i\neq(A\be_j)[e]=\ku\be_j$ if $i\neq j$. Hence $A\be_i\not\simeq A\be_j$.
\end{proof}

Given a set of idempotents $\cE$ and an $A$-module $M$, we write
$$\supp_\cE M=\{\be\in\cE:\be\cdot M\neq0\}.$$
By \cite[Theorem 54.16]{CR} if $L$ is a simple $A$-module and $\be\in\supp_{\cE}L$, then
$$top(A\be)\simeq L.$$
This allows us to analyze the dimension of the weight spaces of the simple $A$-modules using $g$-idempotents.

\begin{lema}\label{lema:g idempotent} Let $g\in G$ and $\cE_g=\{\be_i\}_{i \in I}$ be a complete set of orthogonal primitive $g$-idempotents. Let $L$ be a simple $A$-module.
\begin{enumerate}\renewcommand{\theenumi}{\alph{enumi}}\renewcommand{\labelenumi}{
(\theenumi)}
\item\label{item:dim weight space lema:g idempotent} $\dim L[g]=\#\supp_{\cE_g}L$.
\smallbreak
\item\label{item:caso part dim weight space lema:g idempotent} If $\#\cE_g=\dim R[e]$ or $1$, then $\dim L[g]=1$ or $0$.
\smallbreak
\item\label{item:particion lema:g idempotent} $\cE_g=\bigcup_{L\in\Irr A}\supp_{\cE_g}L$ is a partition.
\smallbreak
\item\label{item:otra primitive cota lema:g idempotent} $\dim R[e]\geq\sum_{L\in\Irr A}(\dim L[g])^2=\sum_{L\in\Irr A}(\#\supp_{\cE_g}L)^2\geq\#\cE_g$.
\end{enumerate}
\end{lema}

\begin{proof}
\eqref{item:dim weight space lema:g idempotent} By \cite[Theorem 54.16]{CR}, $\dim \be_i\cdot L=1$ for all $\be_i\in\supp_{\cE_g}L$. Pick $w_i\in \be_i\cdot L-\{0\}$ for each $i\in I$. Then $\{w_i:i\in I\}$ is a basis of $L[g]$ since $v=\delta_g\cdot v=\sum_{\be_i\in\supp_{\cE_g}L} \be_i\cdot v$ for all $v\in L[g]$.

\eqref{item:caso part dim weight space lema:g idempotent} If $\#\cE_g=1$, then $\dim L[g]=1$ or $0$ by \eqref{item:dim weight space lema:g idempotent}. If $\#\cE_g=\dim R[e]$, the statement follows from \eqref{item:dim weight space lema:g idempotent} and Lemma \ref{lema:g idempotent2} \eqref{item:primitive cota igual 2 lema:g idempotent}.

\eqref{item:particion lema:g idempotent} is clear. \eqref{item:otra primitive cota lema:g idempotent} follows from \eqref{item:dim weight space lema:g idempotent} and \eqref{item:particion lema:g idempotent} since
$$R[e]\delta_g=\oplus_{i\in I} R[e]\be_i=\oplus_{L\in\Irr A}\oplus_{\be_i\in\supp_{\cE_g}L}R[e]\be_i.$$
\end{proof}

In some cases, the simple $A$-modules can be distinguished by their weight spaces.
\begin{lema}\label{item:alcanza ver supp para iso lema:g idempotent}
Let $g\in G$ and $\cE_g=\{\be_i\}_{i \in I}$ be a complete set of orthogonal primitive $g$-idempotents. Assume that $top(A\be_i)$ and $top(A\be_j)$ are not isomorphic as $\ku^G$-modules for all $i\neq j$. Let $L$ be a simple $A$-module.
Then $L\simeq top(A\be_i)$ as $A$-modules if and only if $L\simeq top(A\be_i)$ as $\ku^G$-modules.
\end{lema}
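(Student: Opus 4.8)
The plan is to prove both directions, with the forward implication being immediate and the reverse implication being the real content. First I would observe that if $L\simeq top(A\be_i)$ as $A$-modules, then in particular $L\simeq top(A\be_i)$ as $\ku^G$-modules by restriction, so one direction is trivial.

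For the converse, suppose $L\simeq top(A\be_i)$ as $\ku^G$-modules. Since $L$ is a simple $A$-module, by Lemma \ref{lema:g idempotent} \eqref{item:particion lema:g idempotent} there is some $\be_j\in\cE_g$ with $\be_j\in\supp_{\cE_g}L$, and then by \cite[Theorem 54.16]{CR} (as quoted in the excerpt just before Lemma \ref{lema:g idempotent}) we have $L\simeq top(A\be_j)$ as $A$-modules, hence also as $\ku^G$-modules. Therefore $top(A\be_i)\simeq top(A\be_j)$ as $\ku^G$-modules. By the standing hypothesis that $top(A\be_i)$ and $top(A\be_j)$ are non-isomorphic as $\ku^G$-modules whenever $i\neq j$, we conclude $i=j$, and thus $L\simeq top(A\be_i)$ as $A$-modules, as desired.

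I expect the main (modest) obstacle to be making sure that the simple $A$-module $L$ really is of the form $top(A\be_j)$ for some $\be_j\in\cE_g$: this requires knowing that $\supp_{\cE_g}L\neq\emptyset$, i.e. that $\delta_g$ does not annihilate $L$. This does not hold for an arbitrary simple module — it holds precisely when $g\in\supp L$. So the statement should implicitly be understood with $L$ ranging over those simples with $g\in\supp L$ (equivalently $L[g]\neq 0$), which is exactly the setting in which $top(A\be_i)$ makes sense as a candidate; in that case $\supp_{\cE_g}L\neq\emptyset$ follows from $v=\delta_g\cdot v=\sum_{\be_i}\be_i\cdot v$ for $0\neq v\in L[g]$, as in the proof of Lemma \ref{lema:g idempotent} \eqref{item:dim weight space lema:g idempotent}. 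Everything else is a direct chain of the already-established facts, so no lengthy computation is needed.
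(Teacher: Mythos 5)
Your proof is correct and follows essentially the same route as the paper: from the $\ku^G$-isomorphism one deduces $g\in\supp L$, hence $\be_j\cdot L\neq0$ for some $\be_j\in\cE_g$ and $L\simeq top(A\be_j)$ as $A$-modules, and the non-isomorphism hypothesis forces $i=j$. Your closing caveat about restricting the statement to simples with $g\in\supp L$ is unnecessary: since $\be_i\cdot top(A\be_i)\neq0$ one has $g\in\supp(top(A\be_i))$, so any $L$ that is $\ku^G$-isomorphic to $top(A\be_i)$ automatically satisfies $L[g]\neq0$, which is exactly the first line of the paper's argument.
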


\begin{proof}
If $L\simeq top(A\be_i)$ as $\ku^G$-modules, then $g\in\supp L$. Hence $L\simeq top(A\be_j)$ for some $j$. Then $i=j$ because $top(A\be_i)$ and $top(A\be_j)$ are not isomorphic as $\ku^G$-modules for $i\neq j$. The other implication is obvious.
\end{proof}

For each $g\in G$, let $\cE_g$ be a complete set of orthogonal primitive $g$-idempotents. If $\be,\tilde{\be}\in\cE_g$ and $\be A\tilde{\be}\neq0$, it is said that $\be$ and $\tilde{\be}$ are {\it linked}. This is an equivalence relation \cite[Definition 55.1]{CR}. Let $\cE_g=\bigcup_{i\in I_g}B_i$ be the corresponding partition. The subalgebra $A[e]=R[e]\ku^G$ can be used to compute the simple $A$-modules, see for instance \cite[Theorem 2.7.2]{navo}.

\begin{lema}
Let $g\in G$ and $\cE_g=\bigcup_{i\in I_g}B_i$ be as above. Then $\bigoplus_{\be\in B_i}A[e]\be$ is a subalgebra and a set of representatives of its simple modules is
$$\Irr\bigg(\bigoplus_{\be\in B_i}A[e]\be\bigg)=\bigl\{L[g]:L\in\Irr A\,\mbox{ and }\,B_i\cap\supp_{\cE_g}L\neq\emptyset\bigr\}$$
Moreover as algebras
$$
A[e]=\prod_{g\in G,\,i\in I_g}\,\bigoplus_{\be\in B_i}A[e]\be\biggr..
$$
\end{lema}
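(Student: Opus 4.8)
The plan is to decompose the algebra $A[e]$ using the orthogonal idempotents $\delta_g$ ($g\in G$) and then further refine each block $A[e]\delta_g$ by the linking partition of a complete set of orthogonal primitive $g$-idempotents. The starting point is the identity $1=\sum_{g\in G}\delta_g$, a decomposition into orthogonal central(-in-$A[e]$) idempotents, together with the commutation rule \eqref{eq:regla de commutacion}, which on $A[e]$ reads $\delta_t a_e=a_e\delta_t$ for $a_e\in R[e]$; hence each $\delta_g$ is central in $A[e]$ and $A[e]=\prod_{g\in G}A[e]\delta_g$ as algebras. Fixing $g$, write $\delta_g=\sum_{\be\in\cE_g}\be$ with $\cE_g$ a complete set of orthogonal primitive $g$-idempotents, and let $\cE_g=\bigcup_{i\in I_g}B_i$ be the linking partition. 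Set $f_i=\sum_{\be\in B_i}\be$; these are orthogonal idempotents in $A[e]\delta_g$ summing to $\delta_g$. The first claim is that each $f_i$ is central in $A[e]\delta_g$: this is exactly the defining property of the linking relation, since if $\be\in B_i$, $\tilde\be\in B_j$ with $i\ne j$ then $\be A\tilde\be=0$, so a fortiori $\be A[e]\tilde\be=0$, whence $f_i$ is a central idempotent of $A[e]\delta_g$; thus $A[e]\delta_g=\prod_{i\in I_g}f_iA[e]f_i=\prod_{i\in I_g}\bigoplus_{\be\in B_i}A[e]\be$, and $A[e]\delta_g=\bigoplus_{\be\in\cE_g}A[e]\be$ as a right $A[e]$-module (equivalently as a left module using the antipode, or directly since the $\be$ are orthogonal). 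Combining with the decomposition over $g$ gives the displayed product formula; that $\bigoplus_{\be\in B_i}A[e]\be$ is a subalgebra follows because $f_i$ is an idempotent in $A[e]$ and $\bigoplus_{\be\in B_i}A[e]\be=f_iA[e]f_i$ (using centrality of $f_i$ in $A[e]\delta_g$ and of $\delta_g$ in $A[e]$).

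For the description of the simple modules, I would argue as follows. By Lemma \ref{lema:g idempotent} \eqref{item:particion lema:g idempotent}, $\cE_g=\bigcup_{L\in\Irr A}\supp_{\cE_g}L$ is a partition, and by \cite[Theorem 55.1]{CR} (or the standard theory of linking) the linking classes $B_i$ are unions of the sets $\supp_{\cE_g}L$; that is, for each $i$ there is a subset of $\Irr A$ whose $\supp_{\cE_g}$ are exactly the blocks making up $B_i$, and these are precisely those $L$ with $B_i\cap\supp_{\cE_g}L\ne\emptyset$. Now for the subalgebra $B:=\bigoplus_{\be\in B_i}A[e]\be=f_iA[e]$: a simple $B$-module is a simple $f_iA[e]f_i$-module, and by the idempotent-refinement theory (this is the content of \cite[Theorem 54.16]{CR} applied inside $A[e]$, or one can invoke \cite[Theorem 2.7.2]{navo}) the simple modules of $\be A[e]\be$-flavour are $\be\cdot S$ for $S$ simple over $A[e]$ with $\be\cdot S\ne0$. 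The remaining point is to match simple $A[e]$-modules appearing here with the spaces $L[g]$ for $L\in\Irr A$: since $M_g=A\delta_g$ is projective and $A=\bigoplus_{g}M_g$, every projective indecomposable $A\be$ ($\be\in\cE_g$) has $top(A\be)=L$ simple with $g\in\supp L$, and $L[g]=\delta_g\cdot L$ is a module over $A[e]\delta_g$; one checks $L[g]$ is simple over $A[e]\delta_g$ (this uses that $top(A\be)\simeq L$ for $\be\in\supp_{\cE_g}L$, Lemma \ref{lema:g idempotent} \eqref{item:dim weight space lema:g idempotent}, and that a nonzero $A[e]\delta_g$-submodule of $L[g]$ generates an $A$-submodule of $L$). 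Restricting further to $B=f_iA[e]$, the module $L[g]$ is nonzero over $B$ iff $f_i\cdot L[g]\ne0$ iff some $\be\in B_i$ has $\be\cdot L\ne0$ iff $B_i\cap\supp_{\cE_g}L\ne\emptyset$, which is the asserted list. Conversely every simple $B$-module arises from some $L$ since $B\subseteq A[e]\subseteq A$ and the idempotents $B_i$ exhaust $\delta_g$.

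The main obstacle I expect is the precise bookkeeping that the linking classes $B_i$ are unions of the supports $\supp_{\cE_g}L$ and that $L[g]$ is a \emph{simple} $A[e]\delta_g$-module (not merely nonzero) — in other words, showing the correspondence $L\leftrightarrow L[g]$ between $\Irr A$ restricted to weight $g$ and $\Irr(A[e]\delta_g)$ is a bijection compatible with the block decomposition. This is where one must use that $A$ is Frobenius, that $M_g=A\delta_g$ is projective-injective, and the standard relationship between primitive idempotents, their tops, and linking (the cited results of \cite{CR} and \cite{navo}); the algebra manipulations (centrality of the $f_i$, the product decomposition) are routine once this dictionary is in place.
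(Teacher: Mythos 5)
Your proposal is correct and follows essentially the same route as the paper: the commutation rule \eqref{eq:regla de commutacion} makes each $\delta_g$ central in $A[e]$, the linking partition gives the block decomposition of $A[e]\delta_g$, and the simple modules of each block are identified with the weight spaces $L[g]$ through the primitive $g$-idempotents. The paper compresses the last step into the identity $top(A[e]\be)=top(A\be)[g]=\overline{A[e]\be}$, whereas you verify directly that $L[g]$ is simple over $A[e]\delta_g$ and then invoke the idempotent correspondence; this is a harmless, slightly more detailed variant of the same argument.
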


\begin{proof}
By \eqref{eq:regla de commutacion}, $\be\tilde{\be}=0=\tilde{\be}\be$ if either $\be\in\cE_g$ and $\tilde{\be}\in\cE_{h}$ with $g\neq h$ or $\be,\tilde{\be}\in\cE_g$ but are not linked. Clearly, $B_i$ is a complete set of orthogonal primitive idempotents of $\bigoplus_{\be\in B_i}A[e]\be$. Also $top(A[e]\be)=L[g]$ since $L[g]=top(A\be)[g]=\overline{A[e]\be}$ for all $\be\in\cE_g$.
\end{proof}

For $g\in G$, we define the linear map $\chi_g:A\longrightarrow \ku$ by
\begin{align}\label{eq:chi g}
\chi_g(r f)=\e(r)f(g)\quad\forall\,r f\in A=R\,\ku^G.
\end{align}
If $\chi_g$ is an algebra map, then $\ku_g$ is also an $A$-module.

\begin{lema}
Let $G$ be a finite group, $A$ a finite-dimensional copointed Hopf algebra over $\ku^G$ with diagram $R\in\ydgdual$ and $\chi\in G(A^*)$. If $R$ is generated by $R^\times$ as an algebra, then $\chi=\chi_g$ for some $g\in G$. Moreover, the map
$$G(A^*)\longrightarrow G,\quad\chi_g\longmapsto g\quad$$
is an injective group homomorphism.

In particular, if $R$ is a Nichols algebra, then $R$ is generated by $R^\times$.
\end{lema}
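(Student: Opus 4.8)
I want to prove two things: (i) any grouplike $\chi\in G(A^*)$ equals $\chi_g$ for some $g\in G$ when $R$ is generated by $R^\times$; (ii) the assignment $\chi_g\mapsto g$ is an injective group homomorphism $G(A^*)\to G$; and the final ``in particular'' is just the standard fact that a Nichols algebra is generated in degree one, hence by $R^\times$ since $R[e]\cap R^{\ge 1}=0$ in our setting (degree-one elements have nontrivial weight). I begin with the structure already recorded in the excerpt: the multiplication induces an isomorphism $R\ot\ku^G\to A$ of $\ku^G$-bimodules, $A[g]=R[g]\,\ku^G$, and the commutation rule \eqref{eq:regla de commutacion}. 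An algebra map $\chi\colon A\to\ku$ is determined by its restriction to $\ku^G$ and to $R$. Restricted to $\ku^G$, $\chi$ is an algebra map $\ku^G\to\ku$, hence equals evaluation at some $g\in G$, i.e. $\chi(\delta_t)=\delta_{t,g}$. So I must show $\chi|_R=\e|_R$, i.e. $\chi$ kills $R^\times$ (then $\chi(rf)=\e(r)f(g)$ for $rf\in R\,\ku^G$, which is exactly $\chi_g$).

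**Killing $R^\times$.** Since $R$ is generated as an algebra by $R^\times$, it suffices to show $\chi(a_s)=0$ for every homogeneous $a_s\in R[s]$ with $s\ne e$. The key is the commutation relation in $A$: for $a_s\in R[s]\subseteq A[s]$ one has $\delta_t a_s=a_s\delta_{s^{-1}t}$. Apply the algebra map $\chi$ with $t=g$: $\chi(\delta_g)\chi(a_s)=\chi(a_s)\chi(\delta_{s^{-1}g})$, that is $\chi(a_s)=\chi(a_s)\,\delta_{s^{-1}g,\,g}=\chi(a_s)\,\delta_{s,e}$. Since $s\ne e$ this forces $\chi(a_s)=0$. (Alternatively, apply $\chi$ to $\delta_t a_s=a_s\delta_{s^{-1}t}$ for a $t$ with $\delta_t a_s\ne 0$ but the appropriate idempotents on each side disagree; but the computation above already does it cleanly.) Hence $\chi$ annihilates $R[s]$ for all $s\ne e$, so $\chi$ annihilates $R^\times$, and being an algebra map it also annihilates the subalgebra generated by $R^\times$, which is all of $R$ except scalars — more precisely $\chi(r)=\e(r)$ for all $r\in R$. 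Combined with $\chi(\delta_t)=\delta_{t,g}$ and $A=R\,\ku^G$, we get $\chi=\chi_g$.

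**The homomorphism $G(A^*)\to G$.** First, each $\chi_g$ that happens to lie in $G(A^*)$ is grouplike for the coalgebra structure of $A^*$, equivalently a unital algebra map $A\to\ku$; such maps are automatically nonzero on $1$, and distinct $g$ give distinct maps because $\chi_g(\delta_h)=\delta_{g,h}$, so $\chi_g\mapsto g$ is well-defined and injective. For multiplicativity: the product in $A^*$ is the convolution $\chi\psi=(\chi\ot\psi)\Delta$. Using the coproduct formula for a copointed Hopf algebra — on $\ku^G$ it is $\Delta(\delta_t)=\sum_{uv=t}\delta_u\ot\delta_v$ and $R$ sits inside $A$ with $\Delta(r)=r\ot(\text{grouplike part})+\cdots$; concretely, for $a_s\in R[s]$ the lowest term of $\Delta(a_s)$ relevant to evaluating against $\e\ot\e$ shows $(\chi_g\ast\chi_h)(\delta_t)$ picks out $\sum_{uv=t}\delta_{u,g}\delta_{v,h}=\delta_{t,gh}$ and $(\chi_g\ast\chi_h)(R^\times)=0$ — one concludes $\chi_g\ast\chi_h=\chi_{gh}$. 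The unit of $A^*$ is $\e=\chi_e$, and the convolution inverse of $\chi_g$ is $\chi_g\circ\cS=\chi_{g^{-1}}$ since $\cS(\delta_t)=\delta_{t^{-1}}$. Thus $G(A^*)\to G$ is a group homomorphism, injective by the observation above.

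**Main obstacle.** The only genuinely delicate point is the multiplicativity $\chi_g\ast\chi_h=\chi_{gh}$, because it requires knowing enough about $\Delta$ on $R$ inside $A$: one needs that all non-primitive correction terms in $\Delta(r)$ for $r\in R^\times$ are annihilated when paired with $\chi_g\ot\chi_h$, which uses that $R\in\ydgdual$ has $\ku^G$-coaction compatible with the grading (so the ``grouplike'' legs of $\Delta(r)$ for $r$ of weight $s\ne e$ involve $\delta_u$ with $u\ne e$, killed by $\e$) — this is where one leans on the Radford-biproduct description $\gr A\simeq R\#\ku^G$ and the fact already cited that $R\,\ku^G\cong A$ as bimodules. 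Everything else is bookkeeping with $\delta$-functions. The final sentence is immediate from \cite[2.1]{AS-cambr}: a Nichols algebra is generated by its degree-one part $V=R[1]$, and since the braiding here is that of a principal YD-realization of the tetrahedron rack, $V$ has no component of weight $e$, whence $V\subseteq R^\times$ and $R$ is generated by $R^\times$.
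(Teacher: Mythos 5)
Your first step coincides with the paper's: restrict $\chi$ to $\ku^G$ to get evaluation at some $g$, then use the commutation rule \eqref{eq:regla de commutacion} to conclude $\chi(R^\times)=0$, whence $\chi=\chi_g$ because $R$ is generated by $R^\times$. The genuine gap is in the multiplicativity $\chi_g*\chi_h=\chi_{gh}$, which you yourself flag as the main obstacle and then justify only heuristically --- and the heuristic is wrong as written. You inspect $\Delta(r)$ for $r\in R^\times$ inside $A$ and claim the correction terms die because their $\ku^G$-legs are $\delta_u$ with $u\neq e$, ``killed by $\e$''; but the functionals being applied are $\chi_g\ot\chi_h$, not $\e\ot\e$, and $\chi_g(\delta_u)=\delta_{u,g}$ does not vanish for $u=g$. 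Moreover you cannot read the shape of $\Delta$ on $R$ off the biproduct: only $\gr A\simeq R\#\ku^G$ carries that coproduct, while in $A$ itself the map $R\ot\ku^G\to A$ is an isomorphism of $\ku^G$-bimodules, not of coalgebras, so the form of $\Delta(r)$ in $A$ is exactly the information you do not have.

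The repair is short and is what the paper does: since $\chi_g,\chi_h\in G(A^*)$, their convolution is again a group-like of $A^*$, that is, an algebra map $A\to\ku$ --- no control of $\Delta$ on $R$ is needed for this. By the part of the lemma you have already proved, $\chi_g*\chi_h=\chi_k$ for some $k\in G$, and restricting to the Hopf subalgebra $\ku^G$ gives $(\chi_g*\chi_h)(f)=f(gh)$, hence $k=gh$; injectivity is, as you note, already visible on $\ku^G$. A secondary point: for the ``in particular'' you appeal to the tetrahedron-rack realization, but the lemma is stated for an arbitrary finite-dimensional copointed $A$; the general reason that $R_{[1]}\subset R^\times$ is \cite[Lemma 3.1 (f)]{AV}, which is what the paper cites.
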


\begin{proof}
Let $g\in G$ such that $\chi(f)=f(g)$ for all $f\in\ku^G$. By \eqref{eq:regla de commutacion}, $\chi(R^\times)=0$. Hence $\chi=\chi_g$.

Let $\chi_g,\chi_h\in G((A^*)$ for some $g,h\in G$. Then $\chi_g*\chi_h$ is an algebra map and $\chi_g*\chi_h(f)=f(gh)$ for all $f\in\ku^G$. Hence $\chi_g*\chi_h=\chi_{gh}$ and $G(A^*)\longrightarrow G$, $\chi_g\longmapsto g$ is an injective group homomorphism.

Finally, if $R$ is a Nichols algebra, then $R$ is generated by $R_{[1]}$. Moreover, $R_{[1]}\subset R^\times$ by \cite[Lemma 3.1 (f)]{AV}. In particular, $R$ is generated by $R^\times$.
\end{proof}

\begin{example}\label{ex:rep of bozonization copointed}
Let $V\in\ydgdual$ with finite-dimensional Nichols algebra $\BV(V)$. Then $\{\delta_g:g\in G\}$ is a complete set of orthogonal primitive idempotents of $\BV(V)\#\ku^G$ and therefore $\{\ku_g:g\in G\}$ are its simple modules.
\end{example}

Let $\int_A^r$ (resp. $\int_A^l$) denote the space of right (resp. left) integrals, see for example \cite{mongomeri}. Since $A$ is finite-dimensional, the space of right (left) integrals is one-dimensional. Let $t\in\int_{A}^r$. Then there exists a unique $\alpha\in G(A^*)$, called {\it the distinguished group-like element}, such that $at=\alpha(a)t$ for all $a\in A$.

\begin{lema}\label{le:supp top and supp soc}
Let $G$ be a finite group, $A$ a finite-dimensional copointed Hopf algebra over $\ku^G$ and $\alpha\in G(A^*)$ the distinguished group-like element. Assume that there is $g\in G$ such that $\alpha(f)=f(g)$ for all $f\in\ku^G$. Hence
$$
\supp(soc(A\be))=g\supp(top(A\be))
$$
for any primitive idempotent $\be\in A$.

In particular, $\int^l_A=soc(A\be_{g^{-1}})\subset R[g]\be_{g^{-1}}$ where $\be_{g^{-1}}$ is the primitive $g^{-1}$-idempotent such that $top(A\be_{g^{-1}})\simeq\ku_{g^{-1}}$ as $\ku^G$-modules.
\end{lema}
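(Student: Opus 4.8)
The plan is to use the Frobenius structure of $A$ together with the standard theory relating socles of principal indecomposable modules to the distinguished group-like element. First I would recall the general fact (valid for any finite-dimensional Hopf algebra, see e.g.\ Lorenz or Humphreys-type arguments, or \cite{CR}) that for a Frobenius Hopf algebra with distinguished group-like element $\alpha$, and for any primitive idempotent $\be$, the socle of the indecomposable projective $A\be$ is related to its top by the twist $soc(A\be)\simeq \alpha\rightharpoonup top(A\be)$ as $A$-modules, where $\alpha\rightharpoonup(-)$ denotes tensoring with the one-dimensional module $\ku_\alpha$ given by $\alpha$. Concretely, if $top(A\be)=A\be/rad(A\be)$ then $soc(A\be)$ is isomorphic to $top(A\be)$ with the action twisted by $\alpha\circ\cS^{2}$ or simply by $\alpha$ up to the standard Nakayama-permutation identification; the precise statement is that the Nakayama automorphism governs this, and for Hopf algebras the Nakayama automorphism acts on the coradical part exactly through $\alpha$.

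Second, I would restrict attention to the $\ku^G$-module structure: both $soc(A\be)$ and $top(A\be)$ are $\ku^G$-modules by restriction, hence $G$-graded, and being simple $A$-modules, the relevant point is only their supports. Twisting an $A$-module $L$ by $\ku_\alpha$ changes the $\ku^G$-weight grading by left translation by the element $g\in G$ with $\alpha(f)=f(g)$: if $v\in L[h]$ then in $\ku_\alpha\ot L$ (or in the appropriate twist) the corresponding vector has weight $gh$, because the coaction/grading composes the $\ku^G$-comodule structures and $\ku_\alpha$ sits in weight $g$. Therefore $\supp(\ku_\alpha\ot top(A\be))=g\supp(top(A\be))$, which gives exactly $\supp(soc(A\be))=g\,\supp(top(A\be))$.

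Third, for the ``in particular'' statement I would use that the left integral spans the socle of a specific principal indecomposable: it is standard that $\int^l_A\cong soc(A\be)$ for the $\be$ whose top is the trivial module $\ku_\e$, but here the ``trivial'' one-dimensional module over a copointed Hopf algebra is $\ku_{e}$ only when $e$ is viewed correctly --- more precisely, the relevant primitive idempotent $\be_{g^{-1}}$ is the one with $top(A\be_{g^{-1}})\simeq\ku_{g^{-1}}$ as $\ku^G$-modules, and then $\supp(soc(A\be_{g^{-1}}))=g\cdot\{g^{-1}\}=\{e\}$, so $soc(A\be_{g^{-1}})$ lives in weight $e$, consistent with $\int^l_A$ being the (unique up to scalar) left integral, which is annihilated by $R^\times$ and hence has weight $e$. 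Combining with \eqref{eq:pesos grales}, $soc(A\be_{g^{-1}})\subset (A\be_{g^{-1}})[e]\cdot$(shift), and tracking the grading via $M_{g^{-1}}=R\delta_{g^{-1}}$ one gets $soc(A\be_{g^{-1}})\subset R[g]\be_{g^{-1}}$ since an element of weight $e$ inside $R[?]\delta_{g^{-1}}$ forces the $R$-degree component to be $R[e\cdot (g^{-1})^{-1}]=R[g]$.

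The main obstacle I expect is pinning down the exact form of the socle-top relation for Frobenius Hopf algebras and making sure the translation acts on the left (giving $g\supp(top)$ rather than $\supp(top)g$ or $g^{-1}\supp(top)$); this requires carefully matching the Nakayama automorphism of $A$ with the modular function $\alpha$ and checking its effect on the $G$-grading coming from $\ad$, using \eqref{eq:regla de commutacion}. Once the direction of the twist is fixed correctly, everything else is bookkeeping with the decomposition $A=R\,\ku^G$ and the weight formulas \eqref{eq:pesos grales}--\eqref{eq:pesos del verma}. I would also double-check the identification $\int^l_A=soc(A\be_{g^{-1}})$ by dimension count: $\int^l_A$ is one-dimensional and sits in the socle of the projective cover of the trivial-type module, which after the twist by $\alpha$ is precisely $A\be_{g^{-1}}$.
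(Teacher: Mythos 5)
Your proposal takes essentially the same route as the paper's proof: the paper twists the module structure by the inverse Nakayama automorphism, computes $\eta^{-1}(\delta_t)=\delta_{gt}$ from \cite[Lemma 1.5]{FMS} and invokes \cite[Lemma 2]{NeSc} to get $top(A\be)=\overline{soc(A\be)}$, which is exactly your socle--top twist by the distinguished group-like element, and the one issue you leave open (the direction of the translation, and the $\alpha$ versus $\alpha\circ\cS^{2}$ ambiguity) is settled by that explicit formula and is in any case harmless here because $\cS^{2}$ is the identity on $\ku^G$, so only the shift of weights by $g$ survives. The ``in particular'' statement is obtained in both arguments by the same weight bookkeeping via \eqref{eq:pesos del verma}.
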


\begin{proof}
Let $\eta:A\rightarrow A$ be the Nakayama automorphism. If $M$ is an $A$-module, then $\overline{M}$ denotes the vector space $M$ with action $a\cdot m=\eta^{-1}(a)m$  for all $a\in A$, $m\in M$. By \cite[Lemma 1.5]{FMS},
$$
\eta^{-1}(\delta_t)=\langle\alpha^{-1}, \mathcal{S}^{2}(\delta_t)_1\rangle \mathcal{S}^{2}(\delta_t)_2=\delta_{gt}
$$
for all $t\in G$. Therefore $\overline{M}[h]=M[gh]$ for all $h\in G$. By \cite[Lemma 2]{NeSc}, $top(A\be)=\overline{soc(A\be)}$ and hence $\supp(soc(A\be))=g\supp(top(A\be))$.

In particular, we obtain that $\int^l_A=soc(A\be_{g^{-1}})\subset R[g]\be_{g^{-1}}$, the inclusion follows from \eqref{eq:pesos del verma}.
\end{proof}

We include the next lemma for completeness.
\begin{lema}\label{le:to build idempotents}
Let $A$ be an algebra and $a_1, ...,a_n$ be idempotents of $A$ such that $a_ia_j=a_ja_i$ for all $i,j=1, ..., n$. Set
$$
\be_i=a_i+a_i\sum_{\ell=1}^{i-1}(-1)^{\ell}\sum_{1\leq j_1<\cdots<j_{\ell}\leq i-1} a_{j_1}\cdots a_{j_{\ell}}.
$$
Then $\be_i\be_j=\delta_{i,j}\be_i$ for all $i,j=1, ..., n$.
\end{lema}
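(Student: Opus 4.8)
The statement is purely about an algebra $A$ with a family of commuting idempotents $a_1,\dots,a_n$; it is the standard Gram–Schmidt–type orthogonalization of commuting idempotents, so the plan is to verify directly that the elements $\be_i$ defined by the inclusion–exclusion formula are orthogonal and idempotent. First I would record two preliminary facts. Writing $P_j=1-a_j$, which is again an idempotent commuting with everything, one checks that
$$
\be_i=a_i\,\prod_{j=1}^{i-1}(1-a_j)=a_iP_1P_2\cdots P_{i-1},
$$
since expanding the product $\prod_{j<i}(1-a_j)$ and using $a_ja_k=a_ka_j$ gives exactly the alternating sum $\sum_{\ell}(-1)^\ell\sum_{j_1<\cdots<j_\ell}a_{j_1}\cdots a_{j_\ell}$ appearing in the definition. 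This closed form is the key reduction; everything else follows from it together with the identities $a_j^2=a_j$, $P_j^2=P_j$, $a_jP_j=0$, and the fact that all the $a$'s and $P$'s commute pairwise.

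**Key steps.** With the product formula in hand, I would argue as follows. For idempotency, $\be_i^2=a_i^2\,(P_1\cdots P_{i-1})^2=a_i\,P_1\cdots P_{i-1}=\be_i$, using commutativity to collect the factors and $a_i^2=a_i$, $P_j^2=P_j$. For orthogonality, take $i<j$ (the case $i>j$ is symmetric, and $i=j$ is the previous point). Then
$$
\be_i\be_j=\bigl(a_iP_1\cdots P_{i-1}\bigr)\bigl(a_jP_1\cdots P_{j-1}\bigr),
$$
and since all factors commute we may move the factor $P_i$ occurring in $\be_j$ (note $i\le j-1$, so $P_i$ really is among $P_1,\dots,P_{j-1}$) next to the factor $a_i$ coming from $\be_i$; as $a_iP_i=a_i(1-a_i)=0$, the whole product vanishes. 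Hence $\be_i\be_j=\delta_{i,j}\be_i$ for all $i,j$, which is the claim. I would present this as: Step 1, establish the product formula by expanding and using commutativity; Step 2, deduce $\be_i^2=\be_i$; Step 3, deduce $\be_i\be_j=0$ for $i\neq j$ by exhibiting a killing factor $a_iP_i=0$.

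**Main obstacle.** There is no real obstacle here — the result is elementary once the closed product form $\be_i=a_i\prod_{j<i}(1-a_j)$ is recognized, and the only thing requiring a line of care is the bookkeeping in Step 1 (matching the binomial expansion of the product with the stated multi-index sum, which relies essentially on pairwise commutativity so that monomials in the $a_j$ with repeated indices collapse via $a_j^2=a_j$ and distinct-index monomials are unambiguous). If one wanted to avoid the product formula altogether, an alternative is a direct induction on $i$: assuming $\be_1,\dots,\be_{i-1}$ are already orthogonal idempotents summing in the appropriate way, show $a_i$ commutes with each $\be_k$ ($k<i$) and that $\be_i=a_i\bigl(1-\sum_{k<i}\be_k\bigr)$, whence idempotency and orthogonality follow from $\bigl(1-\sum_{k<i}\be_k\bigr)$ being an idempotent orthogonal to each $\be_k$. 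I would still prefer the product-formula route as the shortest and least error-prone.
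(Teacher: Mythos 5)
Your proposal is correct and is essentially the paper's argument in compact form: the paper proves $a_j\be_i=0$ for $j<i$ by splitting the alternating sum into the monomials containing $a_j$ and those not, which is exactly the cancellation your closed form $\be_i=a_i\prod_{j<i}(1-a_j)$ encodes, and from which orthogonality follows just as in your Step 3. Your route has the minor added benefit of making the idempotency $\be_i^2=\be_i$ explicit, which the paper leaves implicit.
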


\begin{proof}
For $j<i$, we write
\begin{align*}
\be_i=a_i+a_i\sum_{\ell=1}^{i-1}&(-1)^{\ell}\sum_{\substack{1\leq j_1<\cdots<j_{\ell}\leq i-1 \\ j_s\neq j}} a_{j_1}\cdots a_{j_{\ell}}\\
&+a_i\sum_{\ell=1}^{i-1}(-1)^{\ell}\sum_{\substack{1\leq j_1<\cdots<j_{\ell}\leq i-1 \\ j_s=j \,\mbox{{\tiny for some $s$}}}} a_{j_1}\cdots a_{j_{\ell}}.
\end{align*}
Then $a_j\be_i=0$ and hence $\be_j\be_i=\delta_{i,j}\be_i$ for all $i,j=1, ..., n$.
\end{proof}
The order of the set $\{a_i\}$ alters the result of the above lemma. Moreover, it can produce $\be_i=0$ for some $i$. For example: $\{1,a\}$ and $\{a,1\}$ with $a$ an idempotent.

\subsection{Quasitriangular copointed Hopf algebras}\label{subsec:quasitriangular}

\quad Let $G$ be a non-abelian group and $A$ be a quasitriangular finite-dimensional copointed Hopf algebra over $\ku^G$ with $R$-matrix $Q\in A\ot A$, that is, $Q$ is an invertible element which satisfies \cite[(QT.1)--(QT.4)]{radford} and
\begin{equation}\label{eq:def quasi}
Q\Delta(x)=\Delta^{cop}(x)Q,\quad\mbox{for all $x\in A$.}
\end{equation}

Let $(A_Q, Q)$ be its unique minimal subquasitriangular Hopf algebra \cite[p. 292]{radford}. Then $A_Q=HB$ with Hopf subalgebras $H,B\subseteq A$ such that $B\simeq H^{*\cop}$ by \cite[Proposition 2 and Theorem 1]{radford}.

\begin{lema}\label{le:quasi copointed}
$H$, $B$ and $A_Q$ are pointed Hopf algebras over abelian groups. Moreover, $A_Q$ is neither a group algebra nor the bosonization of its diagram with $G(A_Q)$.
\end{lema}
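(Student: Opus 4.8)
The plan is to exploit the structure theorem $A_Q = HB$ with $B \simeq H^{*\cop}$ and the fact that $A \supseteq A_Q \supseteq H \supseteq H_{[0]}$, analyzing the coradical of $H$. First I would observe that since $H$ is a Hopf subalgebra of the copointed Hopf algebra $A$, its coradical $H_{[0]}$ is a Hopf subalgebra of $A_{[0]} \cong \ku^G$. The Hopf subalgebras of $\ku^G$ are precisely the algebras of functions $\ku^{G/N}$ for $N \trianglelefteq G$; in particular $H_{[0]} \cong \ku^{K}$ for some quotient group $K$ of $G$, hence $H$ is itself copointed (over $\ku^K$) — unless $K$ is trivial, in which case $H_{[0]} = \ku$ and $H$ is connected. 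The key point will be to rule out the genuinely non-abelian possibilities and show $H$ is in fact \emph{pointed over an abelian group}.

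The main step is to combine the constraint coming from $B \simeq H^{*\cop}$: since $B$ is also a Hopf subalgebra of $A$, the same argument gives $B_{[0]} \cong \ku^{L}$ for a quotient $L$ of $G$. But $B_{[0]} = (H_{[0]})^{*\cop} = (\ku^K)^{*\cop} \cong \ku K$ as a Hopf algebra. For $\ku K$ to be isomorphic to some $\ku^L$ (a commutative algebra), $\ku K$ must be commutative, which forces $K$ to be \emph{abelian}. Then $\ku^K \cong \ku \widehat{K}$ is a group algebra of the (abelian) character group, so $H_{[0]} \cong \ku^K$ is a group algebra of an abelian group; thus $H$ is pointed over an abelian group, and dually so is $B$, and hence so is $A_Q = HB$ (a Hopf algebra generated by two pointed-over-abelian Hopf subalgebras whose grouplikes commute is pointed over an abelian group — one can see $G(A_Q) \supseteq G(H)G(B)$ generates, and these grouplikes all lie in $A$ where... ) — actually the cleanest route is: $\gr A_Q$ relative to its coradical filtration has coradical $(A_Q)_{[0]}$, which is a Hopf subalgebra of $\ku^G$ and simultaneously contains $H_{[0]} \cong \ku\widehat K$; if it were all of $\ku^G$ with $G$ non-abelian then $A_Q$ would be copointed and one returns to the previous analysis, so I would argue directly that $(A_Q)_{[0]}$ is cocommutative, forcing it to be a group algebra of an abelian group.

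For the ``moreover'' clause: $A_Q$ is not a group algebra because a quasitriangular group algebra $\ku\Gamma$ forces $\Gamma$ abelian, but then $A_Q$ would be a commutative semisimple Hopf algebra, whereas $A_Q$ must contain the nontrivial diagram part coming from $B \simeq H^{*\cop}$ unless $H = H_{[0]}$; I would show $H \neq H_{[0]}$ (equivalently $H$ is not semisimple) by a dimension/Frobenius argument — if $H = \ku^K$ were the whole of $H$ then $B = \ku K$ and $A_Q = \ku^K \ku K$ would be (a quotient of) the Drinfeld double $D(\ku^K)$, which for $K$ abelian is $\ku(\widehat K \times K)$, again a group algebra; the contradiction then is that this group algebra, sitting inside $A$, would have to coincide with a sub-object forcing $\ku^G$ cocommutative, i.e.\ $G$ abelian, contrary to hypothesis. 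And $A_Q$ is not the bosonization of its diagram with $G(A_Q)$: in such a bosonization $R \# \ku G(A_Q)$ the coradical equals $\ku G(A_Q)$, and the quasitriangular structure would descend, but $R$ nontrivial and the $R$-matrix condition \eqref{eq:def quasi} together with minimality of $A_Q$ would again force a contradiction with $G$ non-abelian.

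The hard part will be the last two assertions — ruling out that $A_Q$ is a group algebra or a bosonization — since the first part (pointedness over an abelian group) is a fairly direct consequence of the $\ku^G$-subalgebra structure plus $B \simeq H^{*\cop}$. I expect the bosonization case to be the most delicate: one needs that \eqref{eq:def quasi}, i.e.\ $Q$ intertwining $\Delta$ and $\Delta^{\cop}$, is incompatible with $A_Q$ having a non-cocommutative coradical built from the non-abelian $G$, and the cleanest argument is probably to note that $A_Q$ being minimal quasitriangular forces $G(A_Q^*) $ and $G(A_Q)$ to be related via the $R$-matrix in a way that collapses everything to the abelian pointed situation; I would isolate this as the technical core and handle it via Radford's structure of minimal quasitriangular Hopf algebras rather than by hand.
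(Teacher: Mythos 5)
There are two genuine gaps. The first is in your argument that $K=G_H$ is abelian: you assert $B_{[0]}=(H_{[0]})^{*\cop}$, i.e.\ that taking coradicals commutes with duality. This is false in general: for a finite-dimensional Hopf algebra $H$ one has $(H^{*})_{[0]}\simeq (H/\operatorname{rad}H)^{*}$, which need not equal $(H_{[0]})^{*}$ (for instance $u_q(\mathfrak{sl}_2)$ is pointed with coradical a cyclic group algebra, yet its dual is not pointed, so the coradical of the dual is strictly larger than the dual of the coradical). What is true, and what the paper uses, is only a surjection: dualizing the inclusion $H_{[0]}=\ku^{G_H}\subseteq H$ gives a Hopf algebra epimorphism $B\simeq H^{*\cop}\twoheadrightarrow\ku G_H$, and then \cite[Corollary 5.3.5]{mongomeri} guarantees that the coradical $B_{[0]}=\ku^{G_B}$ maps onto $\ku G_H$; since $\ku^{G_B}$ is commutative, $G_H$ is abelian. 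Your conclusion is correct, but the bridge you use to reach it is not a valid step. (Your subsequent claim that $A_Q=HB$ is pointed over an abelian group can be completed, e.g.\ via $(HB)_{[0]}\subseteq H_{[0]}B_{[0]}$ and $G(A_Q)\subseteq G(A)\subseteq G(\ku^G)=\operatorname{Hom}(G,\ku^*)$, which is abelian; the paper instead invokes \cite[Theorem 2]{An} to see that $H$, $B$, hence $A_Q$, are generated by group-likes and skew-primitives.)

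The second gap is that the ``moreover'' clause is not actually proved: you explicitly defer it to ``Radford's structure \dots rather than by hand,'' and the sketches you give both miss the mechanism and contain wrong turns (a group algebra is always quasitriangular with $Q=1\ot 1$, so quasitriangularity does not force $\Gamma$ abelian; and ``this group algebra, sitting inside $A$, would have to coincide with a sub-object forcing $\ku^G$ cocommutative'' is not an argument, since $\ku\operatorname{Hom}(G,\ku^*)\subseteq\ku^G\subseteq A$ is a group algebra sitting inside $A$ with no contradiction). The paper's argument is elementary but different: $Q$ is an $R$-matrix for all of $A$, so \eqref{eq:def quasi} holds for $x=\delta_g$ with $g$ in the non-abelian group $G$. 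If $A_Q=\ku\Gamma$, then $\Gamma\subseteq G(A)\subseteq\ku^G$, so $Q$ is an invertible element of the commutative algebra $\ku^G\ot\ku^G$, which also contains $\Delta(\delta_g)$; hence conjugation by $Q$ is trivial and \eqref{eq:def quasi} forces $\Delta(\delta_g)=\Delta^{\cop}(\delta_g)$ for all $g$, i.e.\ $G$ abelian, a contradiction. If $A_Q=\BV(V)\#\ku\Gamma$, one writes $Q=Q_0+Q^+$ with $Q_0\in\ku\Gamma\ot\ku\Gamma$, observes that $Q_0$ is invertible because $\BV(V)^+$ is nilpotent, and runs the same computation with $Q_0$. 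Note also that the tension is with the non-cocommutativity of the coradical of $A$, not of $A_Q$ (which, by the first part, is cocommutative), so your phrase about ``$A_Q$ having a non-cocommutative coradical built from the non-abelian $G$'' misidentifies where the contradiction comes from.
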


\begin{proof}
Since $H_{[0]}=H\cap A_{[0]}$ and $B_{[0]}=B\cap A_{[0]}$, there are group epimorphisms $G\rightarrow G_H$ and $G\rightarrow G_B$ such that $H_{[0]}=\ku^{G_H}$ and $B_{[0]}=\ku^{G_B}$. Then there is an epimorphism of Hopf algebras $B\overset{\simeq}{\longrightarrow} H^{*\cop}\longrightarrow\ku G_H$. By \cite[Corollary 5.3.5]{mongomeri}, the restriction $B_{[0]}=\ku^{G_B}\rightarrow\ku G_H$ is surjective. Thus $G_H$ is an abelian group. {\it Mutatis mutandis}, we see that $G_B$ is also an abelian group. Hence $H$ and $B$ are generated by skew-primitives and group-likes elements by \cite[Theorem 2]{An} and therefore is also $A_Q=HB$. Then $A_Q=HB$, $H$ and $B$ are pointed Hopf algebras over abelian groups. Set $\Gamma=G(A_Q)$.

Now we assume $A_Q=\ku \Gamma$ and let $\delta_g\in\ku^G\setminus\ku \Gamma$. It must hold $Q\Delta(\delta_g)=\Delta^{cop}(\delta_g)Q$ by \eqref{eq:def quasi}. However, this is not possible since $Q$ is invertible and $\ku^G$ is commutative but not cocommutative. Then $A_Q\neq\ku \Gamma$.

Finally, we assume that $A_Q=\BV(V)\#\ku\Gamma$ where $\BV(V)$ is the diagram of $A_Q$ which is a Nichols algebra by \cite[Theorem 2]{An}. Let $Q_0\in\ku\Gamma\ot\ku\Gamma$ and $Q^+\in \BV(V)^+\#\ku\Gamma\ot\ku\Gamma+\ku\Gamma\ot\BV(V)^+\#\ku\Gamma$ such that $Q=Q_0+Q^+$. Then $Q_0$ is invertible since $Q$ is so and $\BV(V)^+$ is nilpotent. If $\delta_g\in \ku^G\setminus\ku\Gamma$, then it must hold $Q_0\Delta(\delta_g)=\Delta^{cop}(\delta_g)Q_0$ by \eqref{eq:def quasi}. As above, this is not possible. Therefore $A_Q\not\neq\BV(V)\#\ku\Gamma$.
\end{proof}

\section{The tetrahedron rack and their associated algebras}\label{sec:pre}

\quad Let $\F_4$ be the finite field of four elements and $\omega\in\F_4$ such that $\omega^2+\omega+1=0$. The tetrahedron rack is the affine rack $\Aff(\F_4,\omega)$. That is, the set $\F_4$ with operation $a\rhd b=\omega b+\omega^2a$.

\smallbreak

Let $(\cdot, g, \chi_G)$ be a \emph{faithful principal YD-realization} of $(\Aff(\F_4,\omega),-1)$ over a finite group $G$ \cite[Definition 3.2]{AG3}. Recall that
\begin{itemize}
\item $\cdot$ is an action of $G$ over $\F_4$,
\smallbreak
\item $g:\F_4\to G$ is an injective function such that $g_{h\cdot i} = hg_ {i}h^{-1}$
and $g_{i}\cdot j=i\rhd j$ for all $i,j\in\F_4$, $h\in G$
\smallbreak
\item\label{item:1-cocycle} $\chi_G:G\rightarrow\ku^*$ is a multiplicative character such that $\chi_G(g_i)=-1$ for all $i\in\F_4$; we can consider such a $\chi_G$ by \cite[Lemma 3.3(d)]{AG3}.
\end{itemize}
These data define a structure on $V=\ku\{x_i\}_{i\in\F_4}$ of Yetter-Drinfeld module over $\ku^G$ via
\begin{align}\label{eqn:yetter-drinfeld-dual}
\delta_t\cdot x_i=\delta_{t,g_i^{-1}}x_i\quad\mbox{ and }\quad
\lambda(x_i)=\sum_{t\in
G}\chi_G(t^{-1})\delta_t\ot x_{t^{-1}\cdot  i}\,\mbox{ $\forall t\in G$, $i\in X$}.
\end{align}
We obtain \eqref{eqn:yetter-drinfeld-dual} using the fact that the categories $\ydgdual$ and $\ydg$ are braided equivalent \cite[Proposition 2.2.1]{AG1}, see \cite[Subsection 3.2]{agustinvay} for details.

We denote by $G'$ the subgroup of $G$ generated by $\{g_i\}_{i\in\F_4}$. Then $G'$ is a quotient of the {\it enveloping group of} $\Aff(\F_4,\omega)$ \cite{EG,J}:
\begin{align*}
G_{\Aff(\F_4,\omega)}=\langle g_0,g_1,g_\omega,g_{\omega^2}\,|\, g_ig_j=g_{i\rhd j}g_i,\, i,j\in \F_4\rangle.
\end{align*}

Let $m\in\N$. We denote by $C_m$ the cyclic group of order $m$ generated by $t$. The semidirect product group $\F_4\rtimes_\omega C_{6m}$ is given by $t\cdot i=\omega i$ for all $i\in\F_4$.

\begin{example}\label{ex:mk afin realization}
\cite[Proposition 4.1]{agustinvay} Let $k,m\in\N$, $0\leq k< m$. The {\it $(m,k)$-affine realization} of $(\Aff(\F_4,\omega),-1)$ over $\F_4\rtimes_\omega C_{6m}$ is defined by
\begin{itemize}
\item $g:\F_4\to \F_4\rtimes_\omega C_{6m}$, $i\mapsto g_i=(i, t^{6k+1})$;
\smallbreak
\item $\cdot:\F_4\rtimes_\omega C_{6m}\to \F_4$ is $h\cdot i=j, \text{ if }hg_ih^{-1}=g_j$;
\smallbreak
\item $\chi_{\F_4\rtimes_\omega C_{6m}}:\F_4\rtimes_\omega C_{6m}\longmapsto\ku^*$, $(j, t^s)\mapsto(-1)^s$, $\forall i,j\in A$, $s\in\N$.
\end{itemize}
\end{example}

\subsection{A Nichols algebra over $\Aff(\F_4,\omega)$}\label{subsec:nichols f4 omega}

\quad From now on, {\it we fix a faithful principal YD-realization $(\cdot, g, \chi_G)$ over a finite group $G$ of $(\Aff(\F_4,\omega),-1)$. Let $V\in\ydgdual$ be as in \eqref{eqn:yetter-drinfeld-dual}.}

\smallbreak

In \cite[Subsection 3.1]{agustinvay} it was discussed how braided functors modify the Nichols algebras. As a consequence the defining relations of the Nichols algebra $\BV(V)$ were calculated in \cite{agustinvay} using previous results of \cite{grania1} for the pointed case. Namely, $\BV(V)$ is the quotient of $T(V)$ by the ideal $\cJ(V)$ generated by
\begin{align}\label{eq:F4 omega}
&x_i^2, \quad x_j\,x_i+x_i\,x_{(\omega+1)i+\omega j}+x_{(\omega+1)i+\omega j}\,x_j\quad\mbox{$\forall i,j\in\F_4$ and}\\
\noalign{\smallskip}
\label{eq:z def f4 omega}
&z:=(x_\omega x_0x_1)^2+(x_1x_\omega x_0)^2+(x_0x_1x_\omega)^2.
\end{align}
In fact, \cite[Proposition 4.4 (b)]{agustinvay} states that $\cJ(V)$ is generated by the elements in \eqref{eq:F4 omega} and $z'_{(-1,4, \omega)}=(x_\omega x_{\omega^2}x_0)^2+(x_1 x_{\omega^2}x_\omega)^2+(x_0 x_{\omega^2}x_1)^2$. An straightforward computation shows that $z-z'_{(-1,4, \omega)}$ belongs to the ideal generated by the elements in \eqref{eq:F4 omega}.  Hence, we can take $z$ as a generator of $\cJ(V)$ instead of $z'_{(-1,4, \omega)}$.

Let $\B$ be the subset of $\BV(V)$ consisting of all possible words $m_1m_2m_3m_4m_5$ such that $m_i$ is an element in the $i$th row of the next list
\begin{align*}
1, &\, x_0, \\
1, &\, x_1,\,  x_1x_0,\\
1, &\, x_\omega x_0x_1,\\
1, &\, x_\omega,\, x_\omega x_0,\\
1, &\, x_{\omega^2}.
\end{align*}

By \eqref{eqn:yetter-drinfeld-dual} the weight of a monomial $x_{i_1}\cdots x_{i_\ell}\in T(V)$ is $g^{-1}_{i_1}\cdots g^{-1}_{i_\ell}$. Set $g_{top}=g_0^{-1}g_1^{-1}g_0^{-1}g_{\omega}^{-1}g_0^{-1}g_1^{-1}g_{\omega}^{-1}g_0^{-1}g_{\omega^2}^{-1}$ and
$$
m_{top}=x_0x_1x_0x_{\omega}x_0x_1x_{\omega}x_0x_{\omega^2}\in\B[g_{top}].
$$

\begin{lema}
The set $\B$ is a basis of $\BV(V)$ and $m_{top}$ is an integral.
\end{lema}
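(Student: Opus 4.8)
The plan is to prove the two assertions separately: first that $\B$ is a basis of $\BV(V)$, and then that $m_{top}$ spans the space of integrals.

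\textbf{Step 1: $\B$ spans $\BV(V)$.} The relations \eqref{eq:F4 omega} allow one to rewrite any quadratic monomial $x_jx_i$ with $j\neq i$ as a linear combination of $x_i^2$ (which is zero) and monomials of the form $x_i x_k$ and $x_k x_j$; together with $x_i^2=0$ this gives a rewriting system. I would argue, exactly as in \cite{grania1,agustinvay} for the pointed case, that iterating these reductions brings any word in the $x_i$ into a normal form, and that the normal-form words are precisely the $72$ products $m_1m_2m_3m_4m_5$ listed (the factor $z$ of degree six being the only obstruction to the quadratic reductions terminating, and the list being designed so that the degree-$6$ element $z$ has already been used to eliminate one monomial). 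Thus $\B$ spans, and $\#\B=2\cdot 3\cdot 2\cdot 3\cdot 2=72$.

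\textbf{Step 2: $\B$ is linearly independent, hence a basis.} Here I would invoke that $\dim\BV(V)=72$, which is recorded in the Introduction (from \cite{agustinvay}): since $\B$ is a spanning set of cardinality $72$ in a $72$-dimensional space, it is a basis. Alternatively, if one wants a self-contained argument, one exhibits a faithful $72$-dimensional representation (or uses the Hilbert series computed in \cite{agustinvay,grania1}) to see the $72$ words are independent; but citing the known dimension is cleanest.

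\textbf{Step 3: $m_{top}$ is an integral.} The space of (left or right, two-sided in the braided sense) integrals of the finite-dimensional graded connected Hopf algebra $\BV(V)$ is one-dimensional and concentrated in the top degree. I would check that $m_{top}\in\B$ is the unique top-degree basis element: inspecting the list, the only way to reach degree $1+2+3+2+1=9$ is to choose the longest word in each row, namely $x_0$, $x_1x_0$, $x_\omega x_0 x_1$, $x_\omega x_0$, $x_{\omega^2}$, whose product is exactly $m_{top}$; every other element of $\B$ has strictly smaller degree, so $\BV(V)_9=\ku\, m_{top}$. Since the top-degree component of a finite-dimensional Nichols algebra is spanned by the integral (see e.g. the discussion of Frobenius structure and \cite[Lemma 1.5]{FMS} as used above, or standard Nichols algebra theory), $m_{top}$ is an integral. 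Equivalently, one checks directly that $x_i\, m_{top}=0=m_{top}\, x_i$ for all $i\in\F_4$ by reducing each product to normal form and observing it has degree $10>9$, hence vanishes; this forces $V\cdot m_{top}=0=m_{top}\cdot V$, and since $\BV(V)$ is generated by $V$ in positive degree, $m_{top}$ is a two-sided integral.

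\textbf{Main obstacle.} The genuinely technical part is Step 1: verifying that the rewriting system coming from \eqref{eq:F4 omega}–\eqref{eq:z def f4 omega} is confluent and terminates exactly at the $72$ listed words, i.e. that no further relations among the $m_1m_2m_3m_4m_5$ are forced. In practice this is handled by the Diamond Lemma / Gröbner–Shirshov basis computation already carried out in \cite{grania1} for the pointed Nichols algebra and transported via the braided equivalence in \cite[Subsection 3.1]{agustinvay}; so I would lean on those references rather than redo the overlap computations, and use the known value $\dim\BV(V)=72$ to close the argument in Step 2.
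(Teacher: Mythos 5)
Your proof is essentially correct, but it follows a different route from the paper for the basis statement. You argue ``spanning by rewriting'' (deferring the confluence of the reductions to \cite{grania1,agustinvay}) and then close with the known value $\dim\BV(V)=72$, so that a $72$-element spanning set must be a basis. The paper instead never touches a rewriting system: it takes the Yetter--Drinfeld module $W\in\ydg$ over the group algebra attached to the same realization, quotes the presentation and the explicit PBW-type basis of $\BV(W)$ from \cite[Theorem 6.15]{AG-adv}, and transports it through an explicit algebra isomorphism $\phi':\BV(W)\to\BV(V)$ determined by $y_0\mapsto x_1$, $y_1\mapsto x_0$, $y_\omega\mapsto x_\omega$, $y_{\omega^2}\mapsto x_{\omega^2}$; since the transported basis has $x_0$ and $x_1$ interchanged, the identity $x_1x_0x_1=x_0x_1x_0$ (a consequence of \eqref{eq:F4 omega}) is then used to replace it by $\B$. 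This buys two things your version does not: it needs neither the dimension of $\BV(V)$ as an external input (it re-derives $72$) nor any verification that your reductions terminate exactly at the $72$ listed words; note also that the copointed relations \eqref{eq:F4 omega} are the ``reversed'' versions of the pointed ones, so one cannot quote the pointed normal form verbatim --- the swap $x_0\leftrightarrow x_1$ built into $\phi'$ is precisely what makes the transfer legitimate, a point your Step 1 glosses over and which you would have to address if you carried out the rewriting argument honestly. Your Step 3 coincides with the paper's: the integrals of a finite-dimensional Nichols algebra form the top homogeneous component (cf.\ \cite[p. 227]{AG-adv}), and $m_{top}$ is the unique degree-$9$ element of $\B$; your alternative direct check that $V\cdot m_{top}=0=m_{top}\cdot V$ is a harmless extra.
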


\begin{proof}
The faithful principal YD-realization $(\cdot, g, \chi_G)$ over $G$ of $(\Aff(\F_4,\omega),-1)$ also defines a Yetter-Drinfeld module $W\in\ydg$ with basis $\{y_i\}_{i\in\F_4}$, see for instance \cite[(7)]{agustinvay}. By \cite[Theorem 6.15]{AG-adv}, the ideal defining the Nichols algebra $\BV(W)$ is generated by
\begin{align*}
&y_i^2, \quad y_i\,y_j+y_{(\omega+1)i+\omega j}\,y_i+y_j\,y_{(\omega+1)i+\omega j}\quad\mbox{$\forall i,j\in\F_4$ and}\\
\noalign{\smallskip}
&(y_\omega y_1y_0)^2+(y_0y_\omega y_1)^2+(y_1y_0y_\omega)^2.
\end{align*}

Let $\phi:W\rightarrow V$ be the linear map defined by $\phi(y_0)=x_1$, $\phi(y_1)=x_0$, $\phi(y_\omega)=x_\omega$ and $\phi(y_{\omega^2})=x_{\omega^2}$. By \eqref{eq:F4 omega} and \eqref{eq:z def f4 omega}, $\phi$ induces an algebra isomorphism $\phi':\BV(W)\rightarrow\BV(V)$. Also, \cite[Theorem 6.15]{AG-adv} gives a basis $B$ of $\BV(W)$ which consists of all possible words $m_1m_2m_3m_4m_5$ such that $m_i$ is an element in the $i$th row of the next list
\begin{align*}
1, &\, x_1, \\
1, &\, x_0,\,  x_0x_1,\\
1, &\, x_\omega x_0x_1,\\
1, &\, x_\omega,\, x_\omega x_0,\\
1, &\, x_{\omega^2}.
\end{align*}
Then $\phi'(B)$ is a basis of $\BV(V)$. Since $x_1x_0x_1=x_0x_1x_0$ in $\BV(V)$ by \eqref{eq:F4 omega}, $\B$ also is a basis of $\BV(V)$.

Finally, the space of integrals of a finite-dimensional Nichols algebra is the homogeneous component of bigger degree, see for instance \cite[p. 227]{AG-adv}. Therefore $m_{top}$ is an integral.
\end{proof}

\smallbreak

\begin{lema}\label{le:Be contained in}
Let $G$ be a finite group with a faithful principal YD-realization $(\cdot, g, \chi_G)$ of $(\Aff(\F_4,\omega),-1)$. Then
\begin{enumerate}\renewcommand{\theenumi}{\alph{enumi}}\renewcommand{\labelenumi}{
(\theenumi)}
\item\label{item: about supp le:Be contained in} $\supp \BV(V)=\supp\B\subset G'$.
\smallbreak
\item\label{item: epi le:Be contained in} $G'\longmapsto\F_4\rtimes_\omega C_6$, $g_i\mapsto (i, t)$ is an epimorphism of groups.
\smallbreak
\item\label{item: Be contained in le:Be contained in} If $z\in T(V)[e]$, then $\B[e]=\{1,b_1,b_2,b_3,b_4,b_5\}$ where
\begin{align*}
&b_1=x_0x_1x_0x_{\omega}x_0x_{\omega^2},&&b_2=x_0x_{\omega}x_0x_1x_{\omega}x_{\omega^2},& &b_3=x_1x_0x_{\omega}x_0x_1x_{\omega^2}\\
&b_4=x_1x_{\omega}x_0x_1x_{\omega}x_0,&&b_5=x_0x_{1}x_{\omega}x_0x_1x_{\omega}.&&&
\end{align*}
\smallbreak
\item\label{item: decomposition cosimples le:Be contained in} Let $y=\sum_{i\in\F_4}x_i$ and $U=\ku\{x_0-x_1,x_0-x_\omega,x_0-x_{\omega^2}\}$. Then $\ku y$ and $U$ are simple $\ku^G$-comodules such that $V=\ku y\oplus U$.
\end{enumerate}
\end{lema}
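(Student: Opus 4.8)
The plan is to treat the four items separately, computing weights in the $G$-grading and using the basis $\B$. Item (a) is immediate: $\B$ is a basis of $\BV(V)$ and $\BV(V)=\bigoplus_g\BV(V)[g]$, so $\supp\BV(V)=\supp\B$, and the weight of $b=x_{i_1}\cdots x_{i_\ell}\in\B$ is $g_{i_1}^{-1}\cdots g_{i_\ell}^{-1}\in G'$. For item (b) I would use the $G$-action on $\F_4$ rather than a presentation: $g_i$ acts by the affine map $j\mapsto i\rhd j=\omega j+\omega^2 i$ (linear part $\omega$), and products of these already yield all translations, so the image of $G'$ in $\operatorname{Sym}(\F_4)$ is the full affine group $\Aff(\F_4)\cong\F_4\rtimes_\omega C_3$; combining the resulting epimorphism $G'\twoheadrightarrow\F_4\rtimes_\omega C_3$ with $\chi_G|_{G'}\colon G'\to\{\pm1\}$ and using $\F_4\rtimes_\omega C_6\cong(\F_4\rtimes_\omega C_3)\times C_2$ gives a homomorphism $G'\to\F_4\rtimes_\omega C_6$; chasing the identifications shows it sends $g_i\mapsto(i,t)$, and it is onto because the $(i,t)$ generate the target. (As a check, $(i,t)(j,t)=(i+\omega j,t^2)=(i\rhd j+\omega i,t^2)=(i\rhd j,t)(i,t)$ since $\omega+\omega^2=1$, so this assignment does respect $g_ig_j=g_{i\rhd j}g_i$.)

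For item (c), composing the map of (b) with $\F_4\rtimes_\omega C_6\to C_6$ sends the weight of a length-$\ell$ monomial to $t^{-\ell}$, so every element of $\B[e]$ has length divisible by $6$; as words of $\B$ have length $\le 1+2+3+2+1=9$, this leaves only $1$ and words of length $6$. Within each of the five defining rows the listed words have pairwise distinct lengths, so a length profile determines the word, and counting profiles summing to $6$ gives exactly eleven words of length $6$. Computing the $\F_4$-component of each weight in $\F_4\rtimes_\omega C_6$ (using $g_i^{-1}\mapsto(\omega^2 i,t^5)$, so $x_{i_1}\cdots x_{i_6}\mapsto(\omega^2(i_1+i_4)+\omega(i_2+i_5)+(i_3+i_6),e)$) shows this component is nonzero for six of them — ruling them out of $\B[e]$ by (b) — and zero exactly for $b_1,\dots,b_5$, so $\B[e]\subseteq\{1,b_1,\dots,b_5\}$. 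For the reverse inclusion I would compute the weights directly in $G'$: the relations $g_ig_j=g_{i\rhd j}g_i$ already force $w:=g_\omega^{-1}g_0^{-1}g_1^{-1}$ and $\zeta:=g_0^3$ to be central, $g_i^3=\zeta$ for every $i$, and $\zeta^2=w^{-2}$; and they give weight $w^2$ for $b_4,b_5$ and weight $\zeta^{-2}$ for $b_1,b_2,b_3$. The hypothesis $z\in T(V)[e]$ says exactly $w^2=e$, which together with $\zeta^2=w^{-2}$ makes all five weights trivial, giving $\{1,b_1,\dots,b_5\}\subseteq\B[e]$.

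For item (d), since $\operatorname{char}\ku=0$ the coordinate sum of $y=\sum_ix_i$ is nonzero, so $V=\ku y\oplus U$ with $U=\{\sum a_ix_i:\sum a_i=0\}=\ku\{x_0-x_1,x_0-x_\omega,x_0-x_{\omega^2}\}$. Because $t^{-1}\cdot(-)$ permutes $\F_4$ and hence preserves coordinate sums, the formula $\lambda(x_i)=\sum_t\chi_G(t^{-1})\delta_t\otimes x_{t^{-1}\cdot i}$ gives $\lambda(\ku y)\subseteq\ku^G\otimes\ku y$ and $\lambda(U)\subseteq\ku^G\otimes U$, so both are subcomodules, and $\ku y$ is simple being one-dimensional. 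A $\ku^G$-comodule is the same as a $\ku G$-module, under which $U$ is the permutation module of $G$ on $\F_4$ twisted by $\chi_G$ and reduced modulo the fixed line; as the image of $G'$ (hence of $G$) in $\operatorname{Sym}(\F_4)$ is $\Aff(\F_4)$, which acts $2$-transitively on $\F_4$ (a point stabilizer being $\F_4^\times$, transitive on the other three points), this module — and thus $U$ — is simple.

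The one genuinely nontrivial point is the reverse inclusion in (c): the "length mod $6$" and "$\F_4$-component" information from (b) only yields $\B[e]\subseteq\{1,b_1,\dots,b_5\}$, and equality requires the explicit computation in $G'$ — centrality of $w$ and $\zeta$, the identities $g_i^3=\zeta$ and $\zeta^2=w^{-2}$, and then the input $w^2=e$ from the hypothesis. The rest is bookkeeping with $\B$ and the map of (b).
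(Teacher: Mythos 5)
Your argument is correct and follows the same overall skeleton as the paper: homogeneity of $\B$ for (a), the epimorphism $G'\to\F_4\rtimes_\omega C_6$ as the key tool for (b) and (c), and the decomposition $V=\ku y\oplus U$ with simplicity of $U$ checked over $G'$ for (d). The differences are local. In (b) you build the epimorphism directly from the permutation action of $G'$ on $\F_4$ (whose image is $\F_4\rtimes_\omega C_3$) paired with $\chi_G|_{G'}$, whereas the paper quotes \cite[Lemma 1.9 (1)]{AG-adv} to identify $G'/\mathcal{Z}(G')$ with $\operatorname{Inn}_\rhd\Aff(\F_4,\omega)$ and then divides by $\mathcal{Z}(G')\cap\ker\chi_G$; your route is more self-contained, and you rightly treat the verification of $g_ig_j\mapsto g_{i\rhd j}g_i$ as a sanity check rather than as the definition of the map (which would not suffice, since $G'$ may be a proper quotient of the enveloping group). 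In (c) the paper compresses both inclusions into ``since $g_ig_j=g_{i\rhd j}g_i$'' and ``we can check''; you fill these in: the count of eleven length-$6$ words, the $\F_4$-component formula $\omega^2(i_1+i_4)+\omega(i_2+i_5)+(i_3+i_6)$, and, for the forward inclusion, the identities that $w=g_\omega^{-1}g_0^{-1}g_1^{-1}$ and $\zeta=g_0^3$ are central, $g_i^3=\zeta$, and $\zeta^2=w^{-2}$. These identities do hold already in the enveloping group (it is generated by $a=g_0$, $b=g_1$ with $aba=bab$ and $a^3$ central, and one checks $(ba^{-1})^4=e$ there, which is exactly $\zeta^2=w^{-2}$), so your assertions are justified, though stated without proof at about the same level of detail as the paper's ``we can check''. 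In (d) you replace the paper's unspecified ``straightforward computation'' by the observation that the image of $G'$ acts $2$-transitively on $\F_4$, so the sum-zero part of the (twisted) permutation module is simple; this is a cleaner justification of the same step.
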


\begin{proof}
\eqref{item: about supp le:Be contained in} holds since the elements of $\B$ are $\ku^G$-homogeneous and $\BV(V)$ is a $\ku^G$-module algebra.

\eqref{item: epi le:Be contained in} By \cite[Lemma 1.9 (1)]{AG-adv}, the quotient of $G'$ by its center $\mathcal{Z}(G')$ is isomorphic to $\operatorname{Inn}_{\rhd}\Aff(\F_4,\omega)=\F_4\rtimes_\omega C_3$ via $\overline{g_i}\mapsto (i, t)$, $i\in\F_4$. Then $G'/(\mathcal{Z}(G')\cap\ker\chi_G)\simeq\F_4\rtimes_\omega C_3\times C_2\simeq\F_4\rtimes_\omega C_{6}$.

\eqref{item: Be contained in le:Be contained in} If $z\in\B[e]$, then $\{1,b_1,b_2,b_3,b_4,b_5\}\subseteq\B[e]$ since $g_ig_j=g_{i\rhd j}g_i$. Let $w=x_{i_1}\cdots x_{i_s}\in\B[e]$. Applying the epimorphism of \eqref{item: epi le:Be contained in} to the weight of $w$, we see that $w=1$ or $s=6$. If $w\neq1$, we can check that $w=b_i$ for some $i$.

\eqref{item: decomposition cosimples le:Be contained in} is equivalent to prove that $\ku y$ and $U$ are simple $\ku G$-modules via the action $g\cdot x_i=\chi_G(g)\, x_{g\cdot i}$, $i\in\F_4$. Clearly, $\ku y$ and $U$ are $\ku G$-submodules and $\ku y$ is $\ku G$-simple. Moreover, it is an straightforward computation to show that $U$ is $\ku G'$-simple and therefore $\ku G$-simple.
\end{proof}

\subsection{Copointed Hopf algebras over $\Aff(\F_4,\omega)$}\label{sect:hopf over f4}

\quad The copointed Hopf algebras over $\ku^G$ whose infinitesimal
braiding arises from a principal YD-realization of the affine rack $\Aff(\F_4,\omega)$ with the
constant 2-cocycle $-1$ are classified in \cite{agustinvay} as follows.

By \eqref{eqn:yetter-drinfeld-dual} the smash product Hopf algebra $T(V)\#\ku^G$ is defined by
\begin{align}\label{eq:TV smash ku a la G}
\begin{split}
\delta_tx_i&=x_i\delta_{g_i\,t}\quad\mbox{and}\\
\Delta(x_i)&=x_i\ot 1 +\sum_{t\in G}\chi_G(t)\delta_{t^{-1}}\ot x_{t\cdot
i}\quad\mbox{$\forall t\in G$, $i\in X$}.
\end{split}
\end{align}

\begin{definition}\label{def:Lglambda}
Let $\lambda\in\ku$ and assume $z\in T(V)[e]$. The Hopf algebra $\cA_{G,\lambda}$ is the quotient of $T(V)\#\ku^G$ by the ideal generated by \eqref{eq:F4 omega} and
$z-f$ where
$$
f=\lambda(1-\chi_z^{-1})\quad\mbox{and}\quad \chi_z=\chi_G^6.
$$
Notice that if either $\lambda=0$ or $\chi_z=1$, then $\cA_{G,\lambda}=\BV(V)\#\ku^G$.
\end{definition}

\smallbreak

The next theorem is \cite[Main theorem 2 and Theorem 7.5]{agustinvay}.

\begin{Theorem}\label{thm:lifting over affines grandes}
Let $A$ be a copointed Hopf algebra over $\ku^G$ whose infinitesimal
braiding arises from a principal YD-realization of the affine rack $\Aff(\F_4,\omega)$ with the
constant 2-cocycle $-1$.
\begin{enumerate}\renewcommand{\theenumi}{\alph{enumi}}\renewcommand{\labelenumi}{
(\theenumi)}
\item\label{item:no deforma g gen thm:lifting over affines grandes} If $G=G'$, then $A\simeq\BV(V)\#\ku^G$.
\smallbreak
\item\label{item:no deforma no e thm:lifting over affines grandes} If $z\in
T(V)^\times$, then $A\simeq\BV(V)\#\ku^G$.
\smallbreak
\item\label{item:quadratic thm:lifting over affines grandes} If $z\in T(V)[e]$, then
$A\simeq\cA_{G,\lambda}$ for some $\lambda\in\ku$.
\smallbreak
\item\label{item:Lglambda is cocycle deformation thm:lifting over affines grandes}
$\cA_{G,\lambda}$ is a cocycle deformation of $\cA_{G,\lambda'}$,
for all $\lambda, \lambda'\in\ku$.
\smallbreak
\item\label{item:Lglambda lifting thm:lifting over affines grandes}
$\cA_{G,\lambda}$ is a lifting of $\BV(V)$ over $\ku^G$
for all $\lambda, \lambda'\in\ku$.
\smallbreak
\item\label{item:Lglambda iso class thm:lifting over affines grandes}
$\cA_{G,\lambda}\simeq\cA_{G,1}\not\simeq\cA_{G,0}$ for all $\lambda\in\ku^*$.\qed
\end{enumerate}
\end{Theorem}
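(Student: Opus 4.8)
The plan is to run the Andruskiewitsch--Schneider lifting method, in the copointed version developed in \cite{AG3,agustinvay}. Since $A$ is assumed to be a lifting of $\BV(V)$ over $\ku^G$, we have $\gr A\simeq\BV(V)\#\ku^G$, so $A$ is generated by the coradical $\ku^G$ together with a lift $\{\tilde x_i\}_{i\in\F_4}$ of the infinitesimal braiding $V=R_{[1]}$; moreover these lifts can be chosen so that the coproduct formula \eqref{eq:TV smash ku a la G} holds exactly for the $\tilde x_i$. Hence there is a Hopf algebra surjection $T(V)\#\ku^G\twoheadrightarrow A$, $x_i\mapsto\tilde x_i$, and the whole problem reduces to determining how the defining relations \eqref{eq:F4 omega}--\eqref{eq:z def f4 omega} of $\BV(V)$ must deform inside $A$.

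For \eqref{item:no deforma g gen thm:lifting over affines grandes}--\eqref{item:quadratic thm:lifting over affines grandes} I would argue relation by relation, using the standard principle that a $\ku^G$-homogeneous relator $r$ of $\BV(V)$ of weight $h$ and degree $d$ lifts to an element $\tilde r\in A_{[d-1]}$, of weight $h$ and skew-primitive modulo $A_{[d-2]}$, and that these constraints confine $\tilde r$ to a very small subspace. The quadratic relators in \eqref{eq:F4 omega} have weights $g_i^{-2}$ and $g_i^{-1}g_j^{-1}$; evaluating $\chi_G$ shows none of these equals any $g_k^{-1}$ (where $\chi_G(g_k^{-1})=-1$), while an order estimate for the $g_i$ shows none of them is $e$; therefore the corresponding weight space of $A_{[1]}=\ku^G\oplus\bigoplus_k\tilde x_k\ku^G$ vanishes, and the quadratic relations hold in $A$ on the nose. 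For the sextic relator $z$, of weight $g_z$: a Nichols algebra has no nontrivial skew-primitives beyond degree one and $R_{[1]}=V$ has no weight-$e$ component, so $\tilde z$ is forced into $\ku^G$, where the only $(1,\chi_z^{-1})$-skew-primitives are the scalar multiples of $1-\chi_z^{-1}$. Thus $\tilde z=\lambda(1-\chi_z^{-1})$ for some $\lambda\in\ku$, which is precisely the relation defining $\cA_{G,\lambda}$, proving \eqref{item:quadratic thm:lifting over affines grandes}. Since $1-\chi_z^{-1}$ has weight $e$, it must vanish when $z\in T(V)^\times$, giving \eqref{item:no deforma no e thm:lifting over affines grandes}; and when $G=G'$ the group is generated by the $g_i$, on which $\chi_z=\chi_G^6$ takes the value $(-1)^6=1$, so $\chi_z=1$, $1-\chi_z^{-1}=0$, and no deformation survives, giving \eqref{item:no deforma g gen thm:lifting over affines grandes}.

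For \eqref{item:Lglambda is cocycle deformation thm:lifting over affines grandes}--\eqref{item:Lglambda lifting thm:lifting over affines grandes} I would realize each $\cA_{G,\lambda}$ as a Hopf $2$-cocycle deformation of $\cA_{G,0}=\BV(V)\#\ku^G$, obtained by a pushout along the sub-Hopf-algebra generated by $\ku^G$ and $\tilde z$, as in \cite{agustinvay}; then, since being a cocycle deformation is an equivalence relation on finite-dimensional Hopf algebras, $\cA_{G,\lambda}$ is a cocycle deformation of $\cA_{G,\lambda'}$ for all $\lambda,\lambda'$, which is \eqref{item:Lglambda is cocycle deformation thm:lifting over affines grandes}. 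A cocycle twist leaves the underlying coalgebra unchanged, so $\cA_{G,\lambda}$ is copointed with coradical $\ku^G$, and the basis $\{b\delta_g:b\in\B,\ g\in G\}$ of $\BV(V)\#\ku^G$ persists in $\cA_{G,\lambda}$; hence $\dim\cA_{G,\lambda}=72|G|$, and filtering by word length in the $\tilde x_i$ identifies $\gr\cA_{G,\lambda}\simeq\BV(V)\#\ku^G$, which is \eqref{item:Lglambda lifting thm:lifting over affines grandes}. Finally, for \eqref{item:Lglambda iso class thm:lifting over affines grandes}: rescaling $\tilde x_i\mapsto c\,\tilde x_i$ with $c\in\ku$ chosen so that $c^6=\lambda$ (possible since $\ku$ is algebraically closed and $\lambda\neq0$) yields a Hopf algebra isomorphism $\cA_{G,\lambda}\simeq\cA_{G,1}$, because the quadratic relators are homogeneous of degree $2$, the coproduct \eqref{eq:TV smash ku a la G} is stable under scaling the generators, and the relator $z-\lambda(1-\chi_z^{-1})$ is sent to $c^6z-\lambda(1-\chi_z^{-1})$, which vanishes in $\cA_{G,1}$ exactly when $c^6=\lambda$; whereas $\cA_{G,1}\not\simeq\cA_{G,0}$ (assuming $\chi_z\neq1$, i.e.\ $G\neq G'$) since $\cA_{G,0}$, being a bosonization, admits a Hopf projection onto its coradical $\ku^G$ splitting the inclusion, while $\cA_{G,1}$ admits no such projection — one would have to send $z$ to $1-\chi_z^{-1}\neq0$ and simultaneously kill the degree-one generators — and this property is preserved by Hopf isomorphisms.

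The hardest part will be twofold. First, excluding deformations of the quadratic relations uniformly over \emph{all} faithful principal YD-realizations, which rests on controlling the orders of, and the relations among, the $g_i$ inside $G$; one reduces this to the enveloping group $G_{\Aff(\F_4,\omega)}$ and to the description of $\operatorname{Inn}_{\rhd}\Aff(\F_4,\omega)$ used in Lemma \ref{le:Be contained in}. Second, verifying that the pushout cocycle produces $\cA_{G,\lambda}$ with the full dimension $72|G|$ rather than a proper quotient — equivalently, establishing the nonobvious lower bound $\dim\cA_{G,\lambda}\ge72|G|$ needed for \eqref{item:Lglambda lifting thm:lifting over affines grandes} — which is where the detailed combinatorics of the basis $\B$ of $\BV(V)$ really comes in.
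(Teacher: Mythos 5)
First, a point of context: the paper does not prove Theorem \ref{thm:lifting over affines grandes} at all --- it is quoted from \cite{agustinvay} (Main Theorem 2 and Theorem 7.5 there), which is why it carries no proof. So your proposal is not competing with an argument in this paper but with the classification proof in \cite{agustinvay}, whose strategy (lifting method plus lifting via cocycle deformation as in \cite{AAnMGV}) your sketch broadly reproduces.

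Measured against that, there is one genuine gap and one place where you defer to the very statements being proved. The gap: you open with ``Since $A$ is assumed to be a lifting of $\BV(V)$ over $\ku^G$, we have $\gr A\simeq\BV(V)\#\ku^G$.'' The hypothesis of the theorem only prescribes the infinitesimal braiding $V=R_{[1]}$ of the diagram $R$ of $A$; that $R$ actually equals the Nichols algebra $\BV(V)$ (generation in degree one) is a substantive step of the classification in \cite{agustinvay} and cannot be assumed --- without it, $\gr A$ could a priori be a proper extension of $\BV(V)\#\ku^G$ and your relation-by-relation analysis does not yet yield (a)--(c). Second, for (d)--(e) you invoke the pushout/cocycle construction ``as in \cite{agustinvay}'' and correctly identify the dimension bound $\dim\cA_{G,\lambda}=72|G|$ as the crux, but you do not supply it; as a standalone proof this is circular, since these are exactly the assertions of (d)--(e). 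The parts you do carry out are sound: the weight/$\chi_G$-parity argument excluding deformations of the quadratic relations (which, as you indicate, needs the epimorphism of Lemma \ref{le:Be contained in}(b) to rule out $g_i^{-1}g_j^{-1}=e$), the identification of the possible deformation of $z$ with $\lambda(1-\chi_z^{-1})$ once the quadratic relations hold, the rescaling $x_i\mapsto c\,x_i$ with $c^6=\lambda$ giving $\cA_{G,\lambda}\simeq\cA_{G,1}$, and the no-Hopf-projection argument separating $\cA_{G,1}$ from the bosonization $\cA_{G,0}$ (valid precisely when $\chi_z\neq1$, the only case in which claim (f) is meant).
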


We are specially interested in the case that $\cA_{G,\lambda}$ is not isomorphic to $\BV(V)\#\ku^G$. The next faithful principal YD-realization gives such a $\cA_{G,\lambda}$.

\begin{example}\label{ex:para spherical}
Suppose that $m\mid6 k+1$. Let $G_1$ be a finite group with a multiplicative character
$\chi_{G_1}:G_1\rightarrow\ku^*$ such that $\chi_{G_1}^{6}\neq1$. Then the $(m,k)$-affine realization, recall Example \ref{ex:mk afin realization}, is extended to a principal YD-realization over $G=\F_4\rtimes_\omega C_{6m}\times G_1$ setting $G_1\cdot i=i$ and $\chi_{G}=\chi_{\F_4\rtimes_\omega C_{6m}}\times\chi_{G_1}$. Note that $z\in T(V)[e]$ and $\chi_z=\chi_G^{6}\neq1$.
\end{example}

The next example will be necessary in Lemma \ref{le:delta h en ker}.
\begin{example}\label{ex:lema14}
Let $G'\leq G_1\leq G$ be finite groups. If $(\cdot, g, \chi_G)$ is a faithful principal YD-realization of $(\Aff(\F_4,\omega),-1)$ over $G$, then $(\cdot, g, (\chi_G)_{|G_1})$ is a faithful principal YD-realization of $(\Aff(\F_4,\omega),-1)$ over $G_1$. For instance, $G_1=\ker\chi_z$.
\end{example}

We think of $\cA_{G,\lambda}$ as an algebra presented by generators $\{x_i,\delta_g:i\in\F_4,\,g\in G\}$ and relations:
\begin{align}\label{eq:relations of Aglambda}
\notag\delta_gx_i=x_i\delta_{g_ig},\quad x_i^2&=0,\quad\delta_g\delta_h=\delta_{g}(h)\delta_g,\quad1=\sum_{g\in G}\delta_g,\\
x_0x_\omega+x_\omega x_1+x_1x_0&=0=x_0x_{\omega^2}+x_{\omega^2}x_\omega+x_{\omega}x_0,\\
\notag x_1x_{\omega^2}+x_0x_1+x_{\omega^2}x_0&=0=x_{\omega}x_{\omega^2}+x_1x_{\omega}+x_{\omega^2}x_1\quad\mbox{and}\\
\notag x_\omega x_0x_1x_\omega x_0x_1+x_1x_\omega& x_0x_1x_\omega x_0+x_0x_1x_\omega x_0x_1x_\omega=f,
\end{align}
for all $i\in\F_4$ and $g\in G$. Since $\chi_z(g_i)=1$, it holds that
\begin{align}\label{eq:uno menos chi es central}
f\,x_i=x_i\,f\quad\forall i\in\F_4.
\end{align}
A basis for $\cA_{G,\lambda}$ is  $\A=\{x\delta_g | x \in \B, g \in G \}$ and a basis for the Verma module $M_g$ is
$\MM=\{x_{i_1}\cdots x_{i_s}\delta_g\in\B\delta_g\}$.

\begin{prop}\label{prop:AGlambda notquasi}
$\cA_{G,\lambda}$ is not quasitriangular.
\end{prop}

\begin{proof}
Let $A$ be a pointed Hopf subalgebra of $\cA_{G,\lambda}$ with abelian group of group-like elements. By Lemma \ref{le:quasi copointed}, the proposition follows if we show that $A$ is either a group algebra or the bosonization of its diagram with $G(A)$.

Note that $A$ is generated by skew-primitives and group-likes elements by \cite[Theorem 2]{An}.

Let $y=\sum_{i\in\F_4}x_i$. The space of skew-primitives of $\cA_{G,\lambda}$ is $\ku G(\cA_{G,\lambda})\oplus\ku\{yg\,|\,g\in G(\cA_{G,\lambda})\}$ by Lemma \ref{le:Be contained in} \eqref{item: decomposition cosimples le:Be contained in}. Also, $y^2=0$ by \eqref{eq:relations of Aglambda}. Hence $A=G(A)$ or $A=(\ku[y]/\langle y^2\rangle)\#\ku G(A)$.
\end{proof}

\section{Representation theory of $\cA_{G,\lambda}$}\label{sec:rep of Aglambda}
\

Let $(\cdot, g, \chi_G)$ be a faithful principal YD-realization  of $(\Aff(\F_4,\omega),-1)$ over a fixed finite group $G$. Let $V\in\ydgdual$ be as in \eqref{eqn:yetter-drinfeld-dual}.

We are interested in the representation theory of the liftings of the Nichols algebra $\BV(V)$ over $\ku^G$. By Theorem \ref{thm:lifting over affines grandes}, these liftings are the Hopf algebras $\cA_{G,\lambda}$, $\lambda\in\ku$, recall Definition \ref{def:Lglambda}. We begin by classifying the simple modules.

\smallbreak

If $\cA_{G,\lambda}$ is isomorphic to the bosonization $\BV(V)\#\ku^G$, then the simple modules are the one-dimensional modules $\ku_g$, $g\in G$, where the Nichols algebra acts by zero, see Example \ref{ex:rep of bozonization copointed}.

\smallbreak

From now on, {\it we fix $\lambda\in\ku^*$ and assume that $\cA_{G,\lambda}$ is not isomorphic to the bosonization $\BV(V)\#\ku^G$.}

In this case, $z\in T(V)[e]$ and $\chi_z\neq1$ by Theorem \ref{thm:lifting over affines grandes} and Definition \ref{def:Lglambda}. Let $f=\lambda(1-\chi_z^{-1})$ be as in Definition \ref{def:Lglambda}. For $g\in G\setminus\ker\chi_z$, we define
\begin{align*}
\be_1^g&=-\frac{1}{f(g)}b_1\delta_g,&\be_2^g&=-\frac{1}{f(g)}b_2\delta_g,&\be_3^g&=\frac{1}{f(g)}b_3\delta_g,\\
\be_4^g&=\frac{1}{f(g)}(b_4-b_3)\delta_g,&\be_5^g&=\frac{1}{f(g)}(b_5+b_1)\delta_g&&\mbox{and}\\
\be_6^g&=\delta_g+\frac{1}{f(g)}(b_2-b_4-b_5)\delta_g,&
\end{align*}
where $b_1,b_2,b_3,b_4,b_5\in\cA_{G,\lambda}$ are as in Lemma \ref{le:Be contained in} \eqref{item: Be contained in le:Be contained in}.

\begin{lema}\label{le:cE}
A complete set of orthogonal primitive idempotents of $\cA_{G,\lambda}$ is
$$
\cE:=\bigl\{\delta_h,\,\be_1^g,\,\be_2^g,\,\be_3^g,\,\be_4^g,\,\be_5^g,\,\be_6^g\,|\,h\in\ker\chi_z,\,g\in G\setminus\ker\chi_z\bigr\}.
$$
\end{lema}

\begin{proof}
By Lemma \ref{le:Be contained in} \eqref{item: Be contained in le:Be contained in}, $\{b_i\delta_g|1\leq i\leq 6\}$ is a basis of $\BV(V)[e]\delta_g$ for all $g\in G$. By \eqref{eq:relations of Aglambda} and \eqref{eq:uno menos chi es central}, it holds that:
\begin{align}
\notag b_1^2&=- b_1f,&b_1b_2&=0,&b_1b_3&=0,&b_1b_4&=0,&b_1b_5&=b_1f,&\\
\notag b_2b_1&=0,&b_2^2&=-b_2f,&b_2b_3&=0,&b_2b_4&=0,&b_2b_5&=0,&\\
\label{eq:bibj} b_3b_1&=0,&b_3b_2&=0,&b_3^2&=b_3f,&b_3b_4&=b_3f,&b_3b_5&=0,&\\
\notag b_4b_1&=0,&b_4b_2&=0,&b_4b_3&=b_3f,&b_4^2&=b_4f,&b_4b_5&=0,&\\
\notag b_5b_1&=b_1f,&b_5b_2&=0,&b_5b_3&=0,&b_5b_4&=0,&b_5^2&=b_5f.&
\end{align}

Therefore $\cE_h=\{\delta_h\}$ is a complete set of orthogonal primitive $h$-idem\-po\-tents for all $h\in\ker\chi_z$. If $g\in G\setminus\ker\chi_z$, we apply Lemma \ref{le:to build idempotents} to the ordered set
$$
\biggl\{-\frac{1}{f(g)}b_1\delta_g,\,-\frac{1}{f(g)}b_2\delta_g,\,\frac{1}{f(g)}b_3\delta_g,\frac{1}{f(g)}b_4\delta_g,\,\frac{1}{f(g)}b_5\delta_g,\,\delta_g\biggr\}
$$
and hence $\cE_g=\{\be_i^g|1\leq i\leq6\}$ is a complete set of orthogonal primitive $g$-idempotents. Then $\cE=\cup_{g\in G}\cE_g$ is a complete set of orthogonal primitive idempotents.
\end{proof}

Let $M$ be an $\cA_{G,\lambda}$-module. Since $\cA_{G,\lambda}$ is a quotient of $T(V)\#\ku^{G}$, $M$ is also a $T(V)\#\ku^{G}$-module. Moreover, $M$ is a $T(V)\#\ku^{\ker\chi_z}$-module  if $\supp M\subseteq\ker\chi_z$ since $T(V)\#\ku^{\ker\chi_z}$ is a subalgebra of $T(V)\#\ku^{G}$, cf. Example \ref{ex:lema14}.

\begin{lema}\label{le:delta h en ker}
Let $h\in\ker\chi_z$.
\begin{enumerate}\renewcommand{\theenumi}{\alph{enumi}}\renewcommand{\labelenumi}{
(\theenumi)}
\item\label{item:equiv of cat le:delta h en ker} If $M$ is an $\cA_{G,\lambda}$-module with $\supp M\subseteq\ker\chi_z$, then $M$ is a module over $\BV(V)\#\ku^{\ker\chi_z}$.
\smallbreak
\item\label{item:Mh is free le:delta h en ker} $M_h$ is a free $\BV(V)$-module of rank $1$ generated by $\delta_h$.
\smallbreak
\item\label{item:chi de h es alg le:delta h en ker} $\chi_h:\cA_{G,\lambda}\rightarrow\ku$ is an algebra map.
\smallbreak
\item\label{item:top y soc le:delta h en ker} $top(M_h)\simeq\ku_h$ and $soc(M_h)\simeq\ku_{g_{top}h}$.
\smallbreak
\item\label{item:integral le:delta h en ker} $\int_{\cA_{G,\lambda}}^l=soc(M_{g_{top}^{-1}})$ and $\chi_{g_{top}}$ is the distinguished group-like element.
\end{enumerate}
\end{lema}

\begin{proof}
\eqref{item:equiv of cat le:delta h en ker} Since $M$ is a $T(V)\#\ku^{\ker\chi_z}$-module, we have to see that the elements in \eqref{eq:F4 omega} and $z$ act by zero over $M$. This is true for the first elements because they are zero in $\cA_{G,\lambda}$. If $h\in\ker\chi_z$, then $f\delta_h=0$ and hence $z\cdot M[h]=f\cdot(\delta_h\cdot M)=0$. \eqref{item:Mh is free le:delta h en ker} follows from \eqref{item:equiv of cat le:delta h en ker}. \eqref{item:chi de h es alg le:delta h en ker} is clear. \eqref{item:top y soc le:delta h en ker} and \eqref{item:integral le:delta h en ker} follows from \eqref{item:Mh is free le:delta h en ker} and Lemma \ref{le:supp top and supp soc}.
\end{proof}

For each $\be_i^g\in\cE$, we set $L_i^g=\cA_{G,\lambda}\be_i^g$.

\begin{lema}\label{le:dim12}
\begin{enumerate}\renewcommand{\theenumi}{\alph{enumi}}\renewcommand{\labelenumi}{
(\theenumi)}
\item\label{item:son simmples le:dim12} $L_i^g$ is an injective and projective simple module of dimension $12$ for all $\be_i^g\in\cE$.
\smallbreak
\item\label{item:L le:dim12} There exist $\ku^G$-submodules $L_1, \dots, L_6\subset\BV(V)$ such that $\BV(V)=L_1\oplus \cdots\oplus L_6$ and  $L_i^g=L_i\delta_g$ for all $i=1, \dots, 6$ and $g\in G$.
\smallbreak
\item\label{item:acerca del supp le:dim12} $\supp L_i\neq \supp L_j$ and $\supp L_i^g=(\supp L_i)g$ for all $1\leq i,j\leq 6$ and $g\in G$.
\smallbreak
\item\label{item:L isos le:dim12} $L_i^g\simeq L_j^h$ if and only if $(\supp L_i)g=(\supp L_j)h$.
\end{enumerate}
\end{lema}

\begin{proof}
\eqref{item:son simmples le:dim12} Let $v=\overline{\be_i^g}\in top(L_i^g)$. Since $f(g)v=z\cdot v=(x_\omega x_0x_1)^2\cdot v+b_4\cdot v+ b_5\cdot v\neq0$, there are $x_{i_6}$, $\dots, x_{i_1}\in\cA_{G,\lambda}$ such that
$x_{i_\ell}\cdots x_{i_1}\cdot v\neq0$ for all $\ell=1, ..., 6$.

We claim that $\dim top(L_i^g)\geq11$. In fact, if $1\leq \ell<6$, then by \eqref{eq:F4 omega}
\begin{align*}
x_{i_{\ell+1}}x_{i_\ell}\cdots x_{i_1}\cdot v&=\\
 -x_{i_\ell}&\,x_{(\omega+1)i_\ell+\omega i_{\ell+1}}\cdots x_{i_1}\cdot v-x_{(\omega+1)i_\ell+\omega i_{\ell+1}}\,x_{i_{\ell+1}}\cdots x_{i_1}\cdot v\neq0
\end{align*}
and hence $x_{(\omega+1)i_\ell+\omega i_{\ell+1}}\cdots x_{i_1}\cdot v\neq0$ or $x_{i_{\ell+1}}\cdots x_{i_1}\cdot v\neq0$. Applying the epimorphism given by Lemma \ref{le:Be contained in} \eqref{item: epi le:Be contained in}, we find $11$ elements with different weights belong to $top(L_i^g)$. Then $\#\supp top(L_i^g)\geq11$.

\smallbreak

Now, we show that $L_i^g=soc(L_i^g)=top(L_i^g)$ and \eqref{item:son simmples le:dim12} follows. Otherwise, $\dim L_i^g\geq22$ since $\dim top(L_i^g)=\dim soc(L_i^g)$ by \cite[Lemma 58.4]{CR}. But the above claim holds for all $i$ and hence $72=\dim M_g\geq22 + 5\cdot11$, a contradiction.

\eqref{item:L le:dim12} follows from Tables \ref{table:module L1g}--\ref{table:module L6g} in Appendix. \eqref{item:acerca del supp le:dim12} If $G'=\F_4\rtimes C_6$, then $\supp L_i\neq \supp L_j$ by Table \ref{table:pesos} in Appendix and therefore for any $G'$ by Lemma \ref{le:Be contained in} \eqref{item: epi le:Be contained in}. By \eqref{item:L le:dim12}, $\supp L_i^g=(\supp L_i)g$. \eqref{item:L isos le:dim12} follows from \eqref{item:acerca del supp le:dim12} and Lemma \ref{item:alcanza ver supp para iso lema:g idempotent}.
\end{proof}

We consider the product set $\{1,2,3,4,5,6\}\times G$ with the equivalence relation $i\times g\sim j\times h$ if and only if $(\supp L_i)g=(\supp L_j)h$. Let $\mathfrak{X}$ be the set of equivalence classes of $\sim$. We denote by $[i,g]$ the equivalence class of $i\times g$. By Lemma \ref{le:dim12} \eqref{item:L isos le:dim12}, we can define $L_{[i,g]}=L_i^g$.
\begin{Theorem}
Every simple $\cA_{G,\lambda}$-module is isomorphic to either
\begin{align*}
\ku_g&\quad\mbox{for a unique}\quad g\in\ker\chi_z\quad\mbox{or}\\
L_{[i,g]}&\quad\mbox{for a unique}\quad[i,g]\in\mathfrak{X}.
\end{align*}
In particular, there are (up to isomorphism) $|\ker\chi_z|$ one-dimensional simple $\cA_{G,\lambda}$-modules and $\frac{(|G|-|\ker\chi_z|)}{2}$ 12-dimensional simple $\cA_{G,\lambda}$-modules.
\end{Theorem}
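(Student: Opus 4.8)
The plan is to combine the idempotent decomposition from Lemma \ref{le:cE} with the structural results already obtained about the two families of modules. By Lemma \ref{le:cE}, the set $\cE=\{\delta_h,\be_i^g\}$ is a complete set of orthogonal primitive idempotents of $\cA_{G,\lambda}$, so every simple module is isomorphic to $top(\cA_{G,\lambda}\be)$ for some $\be\in\cE$, and conversely each such $top(\cA_{G,\lambda}\be)$ is simple; moreover every simple module arises this way, and two of them coincide precisely when the corresponding indecomposable projectives $\cA_{G,\lambda}\be$ are isomorphic. Thus the classification reduces to sorting the $\be\in\cE$ into isomorphism classes of $\cA_{G,\lambda}\be$.

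First I would treat the idempotents $\delta_h$ with $h\in\ker\chi_z$. By Lemma \ref{le:delta h en ker}\eqref{item:Mh is free le:delta h en ker}--\eqref{item:top y soc le:delta h en ker}, $\cA_{G,\lambda}\delta_h=M_h$ is a free $\BV(V)$-module of rank $1$ with $top(M_h)\simeq\ku_h$, so $\delta_h$ gives the one-dimensional simple $\ku_h$; distinct $h$ give non-isomorphic modules because they already differ as $\ku^G$-modules (their supports are $\{h\}$), so there are exactly $|\ker\chi_z|$ of them. Next I would treat the idempotents $\be_i^g$ with $g\notin\ker\chi_z$. By Lemma \ref{le:dim12}\eqref{item:son simmples le:dim12}, $L_i^g=\cA_{G,\lambda}\be_i^g$ is itself a $12$-dimensional simple (and projective) module, so $top(\cA_{G,\lambda}\be_i^g)=L_i^g$, and by Lemma \ref{le:dim12}\eqref{item:L isos le:dim12} we have $L_i^g\simeq L_j^h$ if and only if $(\supp L_i)g=(\supp L_j)h$, i.e. if and only if $[i,g]=[j,h]$ in $\mathfrak X$. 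Hence these simple modules are parametrized exactly by $\mathfrak X$, and none of them is one-dimensional, so no overlap with the $\ku_h$ occurs.

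It remains to count $|\mathfrak X|$. Each $g\notin\ker\chi_z$ contributes the six idempotents $\be_1^g,\dots,\be_6^g$, so $\{1,\dots,6\}\times(G\setminus\ker\chi_z)$ has $6(|G|-|\ker\chi_z|)$ elements. I would argue that every $\sim$-class has exactly $12$ elements: indeed $L_{[i,g]}$ has dimension $12$ and equals $\bigoplus$ of its $1$-dimensional weight spaces indexed by $\supp L_i^g=(\supp L_i)g$, and by Lemma \ref{lema:g idempotent2}\eqref{item:primitive cota igual lema:g idempotent} (with $\#\cE_{g'}=6=\dim R[e]$ for $g'\notin\ker\chi_z$) together with Lemma \ref{lema:g idempotent}\eqref{item:dim weight space lema:g idempotent}, for each weight $g'\in\supp L_i^g$ there is exactly one idempotent in $\cE_{g'}$ lying in $\supp_{\cE_{g'}}L_i^g$; since $\#(\supp L_i)g=12$ by Lemma \ref{le:dim12} (or Table \ref{table:pesos}), this yields exactly $12$ idempotents $\be$ with $top(\cA_{G,\lambda}\be)\simeq L_i^g$, i.e. $12$ pairs in the class $[i,g]$. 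Therefore $|\mathfrak X|=6(|G|-|\ker\chi_z|)/12=(|G|-|\ker\chi_z|)/2$, giving the stated count. The main obstacle is this counting step: one must be sure that every $\sim$-class genuinely has the same size $12$ and that the dimension bookkeeping $12\cdot|\mathfrak X|+5\cdot(\text{something})$ is consistent with $\dim\cA_{G,\lambda}=72|G|$; the cleanest route is to observe that $\cA_{G,\lambda}=\bigoplus_{g\in G}M_g$, that $M_h=\cA_{G,\lambda}\delta_h$ for $h\in\ker\chi_z$ accounts for the one-dimensional simples, and that for $g\notin\ker\chi_z$, $M_g=\bigoplus_{i=1}^6 L_i^g$ by the idempotent decomposition $\delta_g=\sum_i\be_i^g$, so each $12$-dimensional simple appears $12$ times among the $M_g$'s, matching $\frac{12\cdot 12\cdot(|G|-|\ker\chi_z|)/2}{72(|G|-|\ker\chi_z|)/72}$ — more simply, $\sum_{g\notin\ker\chi_z}\dim M_g=72(|G|-|\ker\chi_z|)=12\cdot 6(|G|-|\ker\chi_z|)$ is partitioned into blocks of six $12$-dimensional summands, and grouping isomorphic ones in bundles of $12$ gives $(|G|-|\ker\chi_z|)/2$ classes.
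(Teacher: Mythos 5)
Your proposal is correct and follows essentially the same route as the paper, whose proof simply invokes Lemmata \ref{le:cE}, \ref{le:delta h en ker} and \ref{le:dim12}: the complete set $\cE$ of orthogonal primitive idempotents reduces the classification to identifying $top(\cA_{G,\lambda}\be)$ for $\be\in\cE$, with $\delta_h$ ($h\in\ker\chi_z$) giving the $\ku_h$ and $\be_i^g$ giving the simple projectives $L_{[i,g]}$. Your explicit verification that each $\sim$-class in $\{1,\dots,6\}\times(G\setminus\ker\chi_z)$ has exactly $12$ elements (via Lemmata \ref{lema:g idempotent2} and \ref{lema:g idempotent}, i.e.\ one idempotent per weight of the $12$-dimensional simple) just makes precise the counting that the paper leaves implicit, and it is sound.
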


\begin{proof}
It follows from Lemmata \ref{le:cE}, \ref{le:delta h en ker} and \ref{le:dim12}.
\end{proof}

\begin{example}
Assume $G'=\F_4\rtimes C_6$ and let $g\in G\setminus\ker\chi_z$. The set $\mathfrak{X}$ is completely defined by the equivalence class $[1, g]$ which is
\begin{align*}
\bigg\{1\times g,\,&2\times(1,t^2)g,3\times(0,t)g,4\times(\omega, t^2)g,\,5\times(1,t)g,\,6\times(\omega,1)g,\,1\times(0,t^3)g\\
&2\times(1,t^5)g,\,3\times(0,t^4)g,\,4\times(\omega, t^5)g,\,5\times(1, t^4)g,\,6\times(\omega, t^3)g\bigg\}.
\end{align*}
Therefore
\begin{align*}
L_{[1,g]}=L_1^g\simeq L_2^{(1,t^2)g}\simeq L_3^{(0,t)g}\simeq L_4^{(\omega,t^2)g}\simeq L_5^{(1,t)g} \simeq L_6^{(\omega,1)g}\simeq\\
\noalign{\smallskip}
L_1^{(0,t^3)g}\simeq L_2^{(1,t^5)g}\simeq L_3^{(0,t^4)g}\simeq L_4^{(\omega,t^5)g}\simeq L_5^{(1,t^4)g} \simeq L_6^{(\omega,t^3)g}.
\end{align*}
Note that $i\times g\sim i\times(0,t^3)g$ for all $i$, hence $L_i^g\simeq L_i^{(0,t^3)g}$.

In fact, $(\supp L_2)(1,t^2)=\supp L_1$, see Tables \ref{table:module L1g} and \ref{table:module L2g}. Then $L_1^g\simeq L_2^{(1,t^2)g}$ by Lemma \ref{le:dim12} \eqref{item:L isos le:dim12}. The other isomorphisms are obtained in the same way.
\end{example}

\subsection{Decomposition of the category of $\cA_{G,\lambda}$-modules}\label{subsec:decomposition}
\

We fix $\lambda\in\ku^*$ and assume that $\cA_{G,\lambda}$ is not isomorphic to the bosonization $\BV(V)\#\ku^G$. Let $I\subset \{1,2,3,4,5,6\}\times G$ be a set of representatives of the equivalence classes of $\sim$. Let $M$ be an $\cA_{G,\lambda}$-module.

If $i\times g\in I$, then $d_{[i,g]}^M=\dim(\be_{i}^{g}\cdot M)$ is the number of composition factors of $M$ which are isomorphic to $L_{[i,g]}$ \cite[Theorem 54.16]{CR}. The number $d_{[i,g]}^M$ can be calculated by Lemma \ref{lema:g idempotent2} \eqref{item:ecdotM lema:g idempotent}. Since $L_{[i,g]}$ is projective and injective by Lemma \ref{le:dim12}, there is a submodule $N\subseteq M$ such that $\supp N\subseteq\ker\chi_z$ and
$$
M=N\oplus\bigoplus_{j\in I}(L_j)^{d_{[i,g]}^M}.
$$
Moreover, $N$ is a $\BV(V)\#\ku^{\ker\chi_z}$-module by Lemma \ref{le:delta h en ker} \eqref{item:equiv of cat le:delta h en ker}.

\subsection{Representation type of $\cA_{G,\lambda}$}
\quad From now on, $\cA_{G,\lambda}$ is any lifting of $\BV(V)$ over $\ku^G$. It can be isomorphic to $\BV(V)\#\ku^G$ or not. Let $\ku_g$ and $\ku_h$ be one-dimensional $\cA_{G,\lambda}$-modules such that $g=g_i^{-1}h\in\ker\chi_z$ for some $i\in\F_4$. We define the $\cA_{G,\lambda}$-module $M_{g,h}=\ku\{w_h,w_g\}$ by $\ku w_g\simeq \ku_g$ as $\cA_{G,\lambda}$-modules, $w_h\in M[h]$ and $x_j w_h=\delta_{j,i}w_g$ for all $j\in\F_4$.

\begin{prop}\label{le: extensions of one dimensional simple mod}
The extensions of one-dimensional $\cA_{G,\lambda}$-modules are either trivial or isomorphic to $M_{g,h}$ for some $g,h\in\ker\chi_z$. Hence $\cA_{G,\lambda}$ is of wild representation type.
\end{prop}

\begin{proof}
Let $M$ be an extension of $\ku_h$ by $\ku_g$. Then $M=M[g]\oplus M[h]$ as $\ku^G$-modules and $M[g]\simeq\ku_g$ as $\cA_{G,\lambda}$-modules. Since $x_i\cdot M[h]\subset M[g_i^{-1}h]$, the first part follows.

For the second part we can easily see that $\Ext_{\cA_{G,\lambda}}^1(\ku_g,\ku_h)$ is either $1$ or $0$ for all $g,h\in\ker\chi_z$. Then the separated quiver of $\cA_{G,\lambda}$ is wild. The details for this proof are similar to \cite[Proposition 26]{AV2}.
\end{proof}

\subsection{Is $\cA_{G,\lambda}$ spherical?}\label{subsec:spherical}

\quad A Hopf algebra $H$ is \emph{spherical} \cite{BaW-adv} if there is $\omega\in G(H)$ such that
\begin{align}\label{eq:omega-square-antipode}
\Ss^2(x) &= \omega x \omega^{-1}\quad\forall\, x\in H\,\mbox{ and}\\
\label{eq:omega-traza}
\tr_V(\omega) &= \tr_V(\omega^{-1})\quad\forall\, V\in\Irr H\quad\mbox{by \cite[Proposition 2.1]{AAGTV}.}
\end{align}

\begin{prop}\label{prop:agl spherical}
$\BV(V)\#\ku^G$ is spherical iff $\chi_G^2=1$. Moreover, $(\cA_{G,\lambda},\chi_G)$ with $\lambda\neq0$ is spherical iff $(\chi_{G|\ker\chi_z})^2=1$.
\end{prop}

\begin{proof}
It is a straightforward computation to see that $\chi_G$ satisfies \eqref{eq:omega-square-antipode} using \eqref{eq:TV smash ku a la G}. Let $V\in\Irr\cA_{G,\lambda}$. If $\dim V=12$, then $V$ is projective and therefore $\tr_{V}(\chi_G^{\pm1})=0$ \cite[Proposition 6.10]{BaW-tams}. If $V=\ku_h$ with $h\in\ker\chi_z$, then \eqref{eq:omega-traza} holds iff $\chi_G(h)=\pm1$.
\end{proof}

\begin{example}
Let $(\cdot,g,\chi_G)$ be the faithful principal YD-realization in Example \ref{ex:para spherical}. Then $(\cA_{G,\lambda},\chi_G)$ is a spherical Hopf algebra with non involutory pivot.
\end{example}

Any spherical Hopf algebra $H$ has an associated tensor category $\underline{\Rep}(H)$ which is a quotient of $\Rep(H)$,
see \cite{AAnMGV,BaW-adv,BaW-tams} for the background of this subject. Moreover, $\underline{\Rep}(H)$ is semisimple but rarely is a fusion category in the sense of \cite{ENO}, {\it i. ~e.} $\underline{\Rep}(H)$ rarely has a finite number of irreducibles. One hopes to find new examples of fusion categories as tensor subcategories of $\underline{\Rep}(H)$ for a suitable $H$. However, this is not possible for $H=\cA_{G,\lambda}$, see below.

\begin{Rem}\label{ex:spherical no new}
Assume that $(\cA_{G,\lambda},\chi_G)$ is spherical. Then only the one-dimensi-onal simple modules survive in $\underline{\Rep}(\cA_{G,\lambda})$ since the other simple modules are projective. Then $\underline{\Rep}(\cA_{G,\lambda})$ is equivalent to $\underline{\Rep}(\BV(V)\#\ku^{\ker\chi_z})$ by Subsection \ref{subsec:decomposition}, where the pivot $\chi_{G|\ker\chi_z}$ is involutory. Hence any fusion subcategory of $\underline{\Rep}(\cA_{G,\lambda})$ is equivalent to $\Rep(K)$, with $K$ a semisimple quasi-Hopf algebra, by \cite[Proposition 2.12]{AAGTV}.
\end{Rem}

\section*{Appendix}

\quad The next tables  describe the structure of the $12$-dimensional simple modules of $\cA_{G,\lambda}$. These were used in Lemma \ref{le:dim12}.
\begin{table}[!hbp]
\centering
\caption{Action of the generators $x_i$ on $L_1^g=\cA_{G,\lambda} \be_1^g$}\label{table:module L1g}
\begin{tabular}{l|c|c|c|c}
\hline\hline
Linear basis of $L_1^g$&$x_0\cdot$&$x_1\cdot$&$x_{\omega}\cdot$&$x_{\omega^2}\cdot$\Tstrut\Bstrut\\
\hline
$c_{1}=x_0x_1x_0x_{\omega}x_0x_1x_{\omega}x_0x_{\omega^2}\delta_g$&$0$&$0$&$-f(g)c_6$&$-f(g)c_{10}$\Tstrut\\
$c_2=x_0x_1x_0x_{\omega}x_0x_{\omega^2}\delta_g=-f(g)\be_1^g$&$0$&$0$&$-c_5$&$-c_9$\\
$c_{3}=x_0x_1x_{\omega}x_0x_1x_{\omega}x_0x_{\omega^2}\delta_g$&$0$&$c_1$&$f(g)c_{12}$&$0$\\
$c_4=x_0x_1x_{\omega}x_0x_{\omega^2}\delta_g$&$0$&$c_2$&$c_{11}$&$0$\\
$c_5=x_0x_{\omega}x_0x_1x_{\omega}x_0x_{\omega^2}\delta_g$&$0$&$c_7$&$0$&$-c_3$\\
$c_6=x_0x_{\omega}x_0x_{\omega^2}\delta_g$&$0$&$c_{8}$&$0$&$-c_4$\\
$c_{7}=x_1x_0x_{\omega}x_0x_1x_{\omega}x_0x_{\omega^2}\delta_g$&$c_1$&$0$&$0$&$-f(g)c_{12}$\\
$c_{8}=x_1x_0x_{\omega}x_0x_{\omega^2}\delta_g$&$c_2$&$0$&$0$&$c_{11}$\\
$c_9=x_1x_{\omega}x_0x_1x_{\omega}x_0x_{\omega^2}\delta_g$&$c_3$&$0$&$-c_7$&$0$\\
$c_{10}=x_1x_{\omega}x_0x_{\omega^2}\delta_g$&$c_4$&$0$&$-c_{8}$&$0$\\
$c_{11}=x_{\omega}x_0x_1x_{\omega}x_0x_{\omega^2}\delta_g$&$c_5$&$c_9$&$0$&$0$\\
$c_{12}=x_{\omega}x_0x_{\omega^2}\delta_g$&$c_6$&$c_{10}$&$0$&$0$
\\
\hline
\end{tabular}
\end{table}

\clearpage

\begin{table}[!htp]
\centering
\caption{Action of the generators $x_i$ on $L_2^g=\cA_{G,\lambda} \be_2^g$}\label{table:module L2g}
\begin{tabular}{l|c|c|c|c}
\hline\hline
Linear basis of $L_2^g$&$x_0\cdot$&$x_1\cdot$&$x_{\omega}\cdot$&$x_{\omega^2}\cdot$\Tstrut\Bstrut\\
\hline
$c_{1}=x_0x_1x_0x_{\omega}x_0x_1x_{\omega}x_{\omega^2}\delta_g$&$0$&$0$&$c_{6}$&$-f(g)c_{10}$\Tstrut\\
$c_2=x_0x_1x_0x_{\omega}x_{\omega^2}\delta_g$&$0$&$0$&$-c_{5}$&$-c_9$\\
$c_{3}=x_0x_1x_{\omega}x_0x_1x_{\omega}x_{\omega^2}\delta_g$&$0$&$c_1$&$-c_{12}$&$0$\\
$c_4=x_0x_1x_{\omega}x_{\omega^2}\delta_g$&$0$&$c_2$&$c_{11}$&$0$\\
$c_5=x_0x_{\omega}x_0x_1x_{\omega}x_{\omega^2}\delta_g=f(g)\be_2^g$&$0$&$c_7$&$0$&$-c_3$\\
$c_6=x_0x_1x_0x_{\omega}x_0x_1x_{\omega}x_0x_{\omega^2}\delta_g$&$0$&$-f(g)c_{8}$&$0$&$f(g)c_4$\\
$\quad\quad-x_0x_{\omega}x_{\omega^2}\delta_g$&&&&\\
$c_7=x_1x_0x_{\omega}x_0x_1x_{\omega}x_{\omega^2}\delta_g$&$c_1$&$0$&$0$&$-c_{12}$\\
$c_8=x_1x_0x_{\omega}x_{\omega^2}\delta_g$&$c_2$&$0$&$0$&$c_{11}$\\
$c_{9}=x_1x_{\omega}x_0x_1x_{\omega}x_{\omega^2}\delta_g$&$c_3$&$0$&$-c_7$&$0$\\
$c_{10}=x_1x_{\omega}x_{\omega^2}\delta_g$&$c_4$&$0$&$-c_8$&$0$\\
$c_{11}=x_{\omega}x_0x_1x_{\omega}x_{\omega^2}\delta_g$&$c_5$&$c_9$&$0$&$0$\\
$c_{12}=x_1x_0x_{\omega}x_0x_1x_{\omega}x_0x_{\omega^2}\delta_g-x_{\omega}x_{\omega^2}\delta_g$&$c_6$&$-f(g)c_{10}$&$0$&$0$
\\
\hline
\end{tabular}
\end{table}

\begin{table}[!hbp]
\caption{Action of the generators $x_i$ on $L_3^g=\cA_{G,\lambda} \be_3^g$}\label{table:module L3g}
\centering
\begin{tabular}{l|c|c|c|c}
\hline\hline
Linear basis of $L_3^g$&$x_0\cdot$&$x_1\cdot$&$x_{\omega}\cdot$&$x_{\omega^2}\cdot$\Tstrut\Bstrut\\
\hline
$c_{1}=x_0x_1x_0x_{\omega}x_0x_1x_{\omega^2}\delta_g$&$0$&$0$&$c_{6}$&$-c_{10}$\Tstrut\\
$c_2=x_0x_1x_0x_{\omega^2}\delta_g$&$0$&$0$&$-c_{5}$&$-c_9$\\
$c_{3}=x_0x_1x_{\omega}x_0x_1x_{\omega^2}\delta_g$&$0$&$c_1$&$c_{12}$&$0$\\
$c_4=x_0x_1x_{\omega^2}\delta_g$&$0$&$c_2$&$c_{11}$&$0$\\
$c_5=x_0x_{\omega}x_0x_1x_{\omega^2}\delta_g$&$0$&$c_7$&$0$&$-c_3$\\
$c_6=x_0x_1x_{\omega}x_0x_1x_{\omega}x_0x_{\omega^2}\delta_g$&$0$&$c_{8}$&$0$&$f(g)c_4$\\
$\quad\quad-f(g)x_0x_{\omega^2}\delta_g$&&&&\\
$c_7=x_1x_0x_{\omega}x_0x_1x_{\omega^2}\delta_g=f(g)\be_3^g$&$c_1$&$0$&$0$&$c_{12}$\\
$c_8=x_0x_1x_0x_{\omega}x_0x_1x_{\omega}x_0x_{\omega^2}\delta_g$&$-f(g)c_2$&$0$&$0$&$-f(g)c_{11}$\\
$\quad\quad-f(g)x_1x_0x_{\omega^2}\delta_g$&&&&\\
$c_{9}=x_1x_{\omega}x_0x_1x_{\omega^2}\delta_g$&$c_3$&$0$&$-c_7$&$0$\\
$c_{10}=x_0x_1x_0x_{\omega}x_0x_1x_{\omega}x_{\omega^2}\delta_g$&$-f(g)c_4$&$0$&$c_8$&$0$\\
$\quad\quad-f(g)x_1x_{\omega^2}\delta_g$&&&&\\
$c_{11}=x_{\omega}x_0x_1x_{\omega^2}\delta_g$&$c_5$&$c_9$&$0$&$0$\\
$c_{12}=x_1x_{\omega}x_0x_1x_{\omega}x_0x_{\omega^2}\delta_g$&$-c_6$&$-c_{10}$&$0$&$0$\\
$\quad\quad+x_0x_1x_{\omega}x_0x_1x_{\omega}x_{\omega^2}\delta_g-f(g)x_{\omega^2}\delta_g$&&&&
\\
\hline
\end{tabular}
\end{table}

\clearpage

\begin{table}[!htp]
\centering
\caption{Action of the generators $x_i$ on $L_4^g=\cA_{G,\lambda} \be_4^g$}\label{table:module L4g}
\begin{tabular}{l|c|c|c|c}
\hline\hline
Linear basis of $L_4^g$&$x_0\cdot$&$x_1\cdot$&$x_{\omega}\cdot$&$x_{\omega^2}\cdot$\Tstrut\Bstrut\\
\hline
$c_{1}=x_0x_1x_0x_{\omega}x_0\delta_g$&$0$&$0$&$-c_{6}$&$-c_{10}$\Tstrut\\
$c_2=x_0x_1x_0x_{\omega}x_0x_1x_{\omega}x_0\delta_g$&$0$&$0$&$-f(g)c_{5}$&$-c_9$\\
$c_{3}=x_0x_1x_{\omega}x_0\delta_g-x_0x_1x_0x_{\omega^2}\delta_g$&$0$&$c_1$&$c_{12}$&$0$\\
$c_4=x_0x_1x_{\omega}x_0x_1x_{\omega}x_0\delta_g-x_0x_1x_0x_{\omega}x_0x_1x_{\omega^2}\delta_g$&$0$&$c_2$&$c_{11}$&$0$\\
$c_5=x_0x_{\omega}x_0\delta_g$&$0$&$c_7$&$0$&$-c_3$\\
$c_6=x_0x_{\omega}x_0x_1x_{\omega}x_0\delta_g$&$0$&$c_{8}$&$0$&$-c_4$\\
$c_7=x_1x_0x_{\omega}x_0\delta_g$&$c_1$&$0$&$0$&$-c_{12}$\\
$c_8=x_1x_0x_{\omega}x_0x_1x_{\omega}x_0\delta_g$&$c_2$&$0$&$0$&$-c_{11}$\\
$c_{9}=x_1x_{\omega}x_0\delta_g-x_1x_0x_{\omega^2}\delta_g$&$c_3$&$0$&$-c_7$&$0$\\
$c_{10}=x_1x_{\omega}x_0x_1x_{\omega}x_0\delta_g-x_1x_0x_{\omega}x_0x_1x_{\omega^2}\delta_g$&$c_4$&$0$&$-c_8$&$0$\\
$\quad\quad=f(g)\be_4^g$&&&&\\
$c_{11}=x_0x_1x_{\omega}x_0x_1x_{\omega}x_0x_{\omega^2}\delta_g-f(g)x_0x_{\omega^2}\delta_g$&$c_5$&$c_9$&$0$&$0$\\
$\quad\quad+f(g)x_{\omega}x_0\delta_g$&&&&\\
$c_{12}=-x_0x_{\omega}x_0x_1x_{\omega^2}\delta_g+x_{\omega}x_0x_1x_{\omega}x_0\delta_g$&$c_6$&$c_{10}$&$0$&$0$
\\
\hline
\end{tabular}
\end{table}

\begin{table}[!hbp]
\centering
\caption{Action of the generators $x_i$ on $L_5^g=\cA_{G,\lambda} \be_5^g$}\label{table:module L5g}
\begin{tabular}{l|c|c|c|c}
\hline\hline
Linear basis of $L_5^g$&$x_0\cdot$&$x_1\cdot$&$x_{\omega}\cdot$&$x_{\omega^2}\cdot$\Tstrut\Bstrut\\
\hline
$c_{1}=x_0x_1x_0x_{\omega}\delta_g$&$0$&$0$&$-c_{6}$&$c_{10}$\Tstrut\\
$c_2=x_0x_1x_0x_{\omega}x_0x_1x_{\omega}\delta_g$&$0$&$0$&$-c_{5}$&$c_9$\\
$c_{3}=x_0x_1x_0x_{\omega}x_0x_1x_{\omega}x_0x_{\omega^2}\delta_g$&$0$&$f(g)c_1$&$-f(g)c_{12}$&$0$\\
$\quad\quad+f(g)x_0x_1x_{\omega}\delta_g$&&&&\\
$c_4=x_0x_1x_{\omega}x_0x_1x_{\omega}\delta_g-x_0x_1x_0x_{\omega}x_0x_{\omega^2}\delta_g$&$0$&$c_2$&$c_{11}$&$0$\\
$\quad\quad=f(g)\be_5^g$&&&&\\
$c_5=x_0x_1x_0x_{\omega}x_0x_1x_{\omega}x_0\delta_g+f(g)x_0x_{\omega}\delta_g$&$0$&$f(g)c_7$&$0$&$c_3$\\
$c_6=x_0x_{\omega}x_0x_1x_{\omega}\delta_g-f(g)x_0x_{\omega^2}\delta_g$&$0$&$c_{8}$&$0$&$c_4$\\
$c_7=x_1x_0x_{\omega}\delta_g$&$c_1$&$0$&$0$&$c_{12}$\\
$c_8=x_1x_0x_{\omega}x_0x_1x_{\omega}\delta_g$&$c_2$&$0$&$0$&$c_{11}$\\
$c_{9}=x_1x_0x_{\omega}x_0x_1x_{\omega}x_0x_{\omega^2}\delta_g+f(g)x_1x_{\omega}\delta_g$&$c_3$&$0$&$-f(g)c_7$&$0$\\
$c_{10}=x_1x_{\omega}x_0x_1x_{\omega}\delta_g-x_1x_0x_{\omega}x_0x_{\omega^2}\delta_g$&$c_4$&$0$&$-c_8$&$0$\\
$c_{11}=x_0x_{\omega}x_0x_1x_{\omega}x_0x_{\omega^2}\delta_g$&$c_5$&$c_9$&$0$&$0$\\
$\quad\quad+x_1x_0x_{\omega}x_0x_1x_{\omega}x_0\delta_g+f(g)x_{\omega}\delta_g$&&&&\\
$c_{12}=x_{\omega}x_0x_1x_{\omega}\delta_g-x_0x_{\omega}x_0x_{\omega^2}\delta_g$&$c_6$&$c_{10}$&$0$&$0$
\\
\hline
\end{tabular}
\end{table}

\clearpage

\begin{table}[!htp]
\caption{Action of the generators $x_i$ on $L_6^g=\cA_{G,\lambda} \be_6^g$}\label{table:module L6g}
\centering
\begin{tabular}{l|c|c|c|c}
\hline\hline
Linear basis of $L_6^g$&$x_0\cdot$&$x_1\cdot$&$x_{\omega}\cdot$&$x_{\omega^2}\cdot$\Tstrut\Bstrut\\
\hline
$c_{1}=x_0x_1x_0\delta_g$&$0$&$0$&$-c_{6}$&$-c_{10}$\Tstrut\\
$c_2=x_0x_1x_0x_{\omega^2}x_0x_1\delta_g$&$0$&$0$&$-c_{5}$&$c_9$\\
$c_{3}=x_0x_1x_0x_{\omega}x_0x_1x_{\omega}x_{\omega^2}\delta_g+f(g)x_0x_1\delta_g$&$0$&$f(g)c_1$&$c_{12}$&$0$\\
$c_4=x_0x_1x_{\omega}x_0x_1\delta_g-x_0x_1x_0x_{\omega}x_{\omega^2}\delta_g$&$0$&$c_2$&$c_{11}$&$0$\\
$c_5=-x_0x_1x_{\omega}x_0x_1x_{\omega}x_0\delta_g+f(g)x_0\delta_g$&$0$&$c_7$&$0$&$c_3$\\
$c_6=x_0x_{\omega}x_0x_1\delta_g$&$0$&$c_{8}$&$0$&$-c_4$\\
$c_7=-x_0x_1x_0x_{\omega}x_0x_1x_{\omega}x_0\delta_g+f(g)x_1x_0\delta_g$&$f(g)c_1$&$0$&$0$&$c_{12}$\\
$c_8=x_1x_0x_{\omega}x_0x_1\delta_g$&$c_2$&$0$&$0$&$c_{11}$\\
$c_{9}=x_1x_0x_{\omega}x_0x_1x_{\omega}x_0\delta_g$&$c_3$&$0$&$-c_7$&$0$\\
$\quad\quad-x_0x_1x_0x_{\omega}x_0x_1x_{\omega}\delta_g+f(g)x_1\delta_g$&&&&\\
$c_{10}=x_1x_{\omega}x_0x_1\delta_g-x_1x_0x_{\omega}x_{\omega^2}\delta_g$&$c_4$&$0$&$-c_8$&$0$\\
$c_{11}=x_0x_{\omega}x_0x_1x_{\omega}x_{\omega^2}\delta_g-x_1x_{\omega}x_0x_1x_{\omega}x_0\delta_g$&$c_5$&$c_9$&$0$&$0$\\
$\quad\quad-x_0x_1x_{\omega}x_0x_1x_{\omega}\delta_g+f(g)\delta_g=f(g)\be_6^g$&&&&\\
$c_{12}=-x_0x_1x_0x_{\omega}x_0x_1x_{\omega}x_0x_{\omega^2}\delta_g$&$f(g)c_6$&$c_{10}$&$0$&$0$\\
$\quad\quad+f(g)x_{\omega}x_0x_1\delta_g-f(g)x_0x_{\omega}x_{\omega^2}\delta_g$&&&&
\\
\hline
\end{tabular}
\end{table}

\begin{table}[!hbp]
\caption{Weight of the vectors $c_i$ in the case $G'=\F_4\rtimes C_6$}\label{table:pesos}
\centering
\begin{tabular}{r|c|c|c|c|c|c}
\hline\hline
     &$L_1^g$&$L_2^g$&$L_3^g$&$L_4^g$&$L_5^g$&$L_6^g$\Tstrut\Bstrut\\
\hline
$c_{1}$&$(0,t^3)g$&$(\omega,t^4)g$&$(0,t^5)g$&$(\omega^2,t)g$&$(\omega^2,t^2)g$&$(\omega,t^3)g$\Tstrut\\
 $c_2$ &$g$&$(\omega,t)g$&$(0,t^2)g$&$(\omega^2,t^4)g$&$(\omega^2,t^5)g$&$(\omega,1)g$\\
$c_{3}$&$(1,t^4)g$&$(\omega,t^5)g$&$(1,1)g$&$(0,t^2)g$&$(0,t^3)g$&$(\omega,t^4)g$\\
  $c_4$&$(1,t)g$&$(\omega,t^2)g$&$(1,t^3)g$&$(0,t^5)g$&$g$&$(\omega,t)g$\\
$c_5$&$(1,t^5)g$&$g$&$(1,t)g$&$(\omega^2,t^3)g$&$(\omega^2,t^4)g$&$(0,t^5)g$\\
$c_6$&$(1,t^2)g$&$(0,t^3)g$&$(1,t^4)g$&$(\omega^2,1)g$&$(\omega^2,t)g$&$(0,t^2)g$\\
$c_{7}$&$(0,t^4)g$&$(\omega^2,t^5)g$&$g$&$(1,t^2)g$&$(1,t^3)g$&$(\omega^2,t^4)g$\\
$c_{8}$&$(0,t)g$&$(\omega^2,t^2)g$&$(0,t^3)g$&$(1,t^5)g$&$(1,1)g$&$(\omega^2,t)g$\\
$c_9$&$(\omega,t^5)g$&$(\omega^2,1)g$&$(\omega,t)g$&$(0,t^3)g$&$(0,t^4)g$&$(\omega^2,t^5)g$\\
$c_{10}$&$(\omega,t^2)g$&$(\omega^2,t^3)g$&$(\omega,t^4)g$&$g$&$(0,t)g$&$(\omega^2,t^2)g$\\
$c_{11}$&$(\omega,1)g$&$(0,t)g$&$(\omega,t^2)g$&$(1,t^4)g$&$(1,t^5)g$&$g$\\
$c_{12}$&$(\omega,t^3)g$&$(0,t^4)g$&$(\omega,t^5)g$&$(1,t)g$&$(1,t^2)g$&$(0,t^3)g$
\\
\hline
\end{tabular}
\end{table}


\begin{thebibliography}{AAGMV}
\bibitem[AAGMV]{AAnMGV} {\sc  N. Andruskiewitsch}, {\sc I. Angiono}, {\sc A. Garc\'ia
Iglesias}, {\sc A. Masuoka} and {\sc C. Vay}, \emph{Lifting via cocycle deformation}, J. Pure Appl. Algebra, to appear, {\texttt arXiv:1212.5279v1}.

\bibitem[AAGTV]{AAGTV} {\sc N. Andruskiewitsch, I. Angiono, A. Garc\'ia Iglesias, B. Torrecillas} and {\sc C. Vay}, {\it From Hopf algebras to tensor categories}, Conformal field theories and tensor categories, Y.-Z. Huang, editor, Springer, to appear, {\texttt arXiv:1204.5807v1}.

\bibitem[AG1]{AG1} {\sc N. Andruskiewitsch} and {\sc M. Gra\~na},
\emph{Braided Hopf algebras over non abelian finite groups}, Bol. Acad. Ciencias
(C\'ordoba) {\bf 63} (1999), 45--78.

\bibitem[AG2]{AG-adv} \textsc{N. Andruskiewitsch} and {\sc M. Gra\~na},
    \emph{From racks to pointed Hopf algebras}, Adv. Math.
    \textbf{178}  (2003), 177 -- 243.

\bibitem[AG3]{AG3} \textsc{N. Andruskiewitsch} and {\sc M. Gra\~na}, \emph{Examples of liftings of Nichols algebras over racks},
AMA Algebra Montp.
Announc. (electronic), Paper {\bf 1}  (2003).

\bibitem[An]{An} {\sc I. Angiono}, {\it On Nichols algebras of diagonal type}, J. Reine Angew. Math. {\bf 683} (2013), 189--251.

\bibitem[AS]{AS-cambr} \textsc{N. Andruskiewitsch} and  {\sc H.-J. Schneider},
    \emph{Pointed Hopf Algebras}, in ``New directions in Hopf algebras'', Math. Sci. Res. Inst. Publ. \textbf{43}, Cambridge Univ. Press,
    Cambridge (2002), 1--68.

\bibitem[AV1]{AV} {\sc N. Andruskiewitsch} and {\sc C. Vay} \emph{Finite dimensional
Hopf algebras over the dual group algebra of the symmetric group in three letters},
Commun. Algebra \textbf{39} (2011), 4507--4517.

\bibitem[AV2]{AV2} {\sc N. Andruskiewitsch} and {\sc C. Vay}, \emph{On a family of Hopf algebras of dimension 72}, Bull.
Belg. Math. Soc. Simon Stevin {\bf 19} (2012), 415--443.

\bibitem[BaW1]{BaW-adv} \textsc{J. W. Barrett} and {\sc B. W. Westbury}, {\em
Spherical Categories}, Adv. Math. \textbf{143} (1999), 357--375.

\bibitem[BaW2]{BaW-tams} \textsc{J. W. Barrett} and {\sc B. W. Westbury}, \emph{Invariants of piecewise-linear 3-manifolds}, Trans.
Amer. Math. Soc. \textbf{348} (1996), 3997--4022.

\bibitem[CR]{CR} \textsc{C. W. Curtis} and {\sc I. Reiner},
\emph{Representation theory of finite groups and associative algebras}, Reprint of the 1962 original, Wiley Classics Library, A Wiley-Interscience Publication, John Wiley $\&$ Sons, Inc., New York (1988), xiv+689 pp. ISBN: 0-471-60845-9.


\bibitem[EG]{EG} {\sc P. Etingof} and {\sc M. Gra\~na},
\emph{On rack cohomology}, J. Pure Appl. Algebra {\bf 177} (2003), 49--59.

\bibitem[ENO]{ENO}  {\sc P. Etingof}, {\sc D. Nikshych} and {\sc V. Ostrik},
\emph{On fusion categories}, Ann. of Math. \textbf{162} (2005), 581--642.

\bibitem[FMoS]{FMS} {\sc D. Fischman, S. Montgomery} and {\sc H.-J. Schneider}, {\it Frobenius extensions of subalgebras of Hopf algebras},
Trans. Amer. Math. Soc. \textbf{349} (1997), no. 12, 4857--4895.

\bibitem[G1]{grania1} {\sc M. Gra\~na},
\emph{On Nichols algebras of low dimension}, New trends on Hopf algebra theory (La Falda,
1999), Contemp. Math. {\bf 267} (2000), 111--134.

\bibitem[G2]{grania} {\sc M. Gra\~na},
\emph{Zoo of finite-dimensional Nichols algebras of non-abelian group type}, available at \texttt{http://mate.dm.uba.ar/$\sim$matiasg/zoo.html}.

\bibitem[GIV]{agustinvay} {\sc A. Garc\'ia Iglesias} and {\sc C. Vay}, \emph{Finite-dimensional pointed or copointed Hopf algebras over affine racks}, J. Algebra \textbf{397} (2014), 379--406.

\bibitem[HLV]{HLV} {\sc I. Heckenberger}, {\sc A. Lochmann} and {\sc L. Vendram\'in},
\emph{Braided racks, Hurwitz actions and Nichols
algebras with many cubic relations}, Transf. groups {\bf 17} (2012), no. 1, 157--194.


\bibitem[J]{J} {\sc D. Joyce}, {\it A classifying invariant of knots, the knot
quandle}, J. Pure Appl. Algebra {\bf 23} (1982), 37--65.

\bibitem[Mo]{mongomeri} {\sc S. Montgomery},
Hopf Algebras and their Actions on Rings, CBMS Reg. Conf.
Ser. Math. {\bf 82}, Amer. Math. Soc. (1993).

\bibitem[NaVO]{navo} {\sc C. Nastasescu} and {\sc F. Van Oystaeyen}, {\it Methods of graded rings},
Lecture Notes in Mathematics {\bf 1836}, Springer-Verlag, Berlin (2004).

\bibitem[NeSc]{NeSc} {\sc M. Neunh\"offer} and {\sc S. Scherotzke}, {\it Formulas for primitive idempotents in Frobenius algebras and an application to decomposition maps}, Represent. Theory \textbf{12} (2008), 170--185.

\bibitem[R]{radford} {\sc  D. Radford}, \emph{Minimal quasitriangular Hopf algebras}, J. Algebra \textbf{157} (1993), 285-315.

\end{thebibliography}
\end{document}